\documentclass[11pt]{amsart}
\usepackage{amssymb, mathrsfs, enumerate, amsmath, amsthm, url,  xcolor}
\usepackage[T1]{fontenc}
\usepackage{comment}
\usepackage{mathtools}
\usepackage{dsfont}
\usepackage[all]{xy}

\usepackage[utf8]{inputenc}
\usepackage[colorlinks=true,linkcolor=blue,citecolor=blue,urlcolor=blue]{hyperref}

\allowdisplaybreaks

\usepackage[english]{babel}

\usepackage[top=2.3cm, bottom=2.4cm, left=2.8cm, right=2.8cm]{geometry}

\theoremstyle{plain}
\newtheorem{theorem}{Theorem}[section]
\newtheorem{conjecture}[theorem]{Conjecture}
\newtheorem{proposition}[theorem]{Proposition}
\newtheorem{lemma}[theorem]{Lemma}
\newtheorem{corollary}[theorem]{Corollary}

\theoremstyle{definition}

\newtheorem{example}[theorem]{Example}

\newtheorem{remark}[theorem]{Remark}
\newtheorem{definition}[theorem]{Definition}
\newtheorem{construction}[theorem]{Construction}

\newtheorem{implementation}[theorem]{Implementation}

\newtheorem*{notation*}{Notation}

\newcommand{\SmallMatrix}[1]{\text{{
\Small
\arraycolsep=0.4\arraycolsep\ensuremath
    {\begin{pmatrix}#1\end{pmatrix}}}}}

\newcommand{\hs}{\kern 0.8pt}
\newcommand{\hssh}{\kern 1.2pt}
\newcommand{\hshs}{\kern 1.6pt}
\newcommand{\hssss}{\kern 2.0pt}
\newcommand{\ha}{{\kern 1pt}}

\newcommand{\hm}{\kern -0.8pt}
\newcommand{\hmm}{\kern -1.2pt}

\newcommand{\C}{{\mathds C}}
\newcommand{\R}{{\mathds R}}
\newcommand{\G}{{\mathds G}}

\newcommand{\GL}{{\rm GL}}
\newcommand{\Stab}{{\rm Stab}}

\newcommand{\Id}{{\rm Id}}

\newcommand{\upgam}{{{}^\gamma\hm}}

\newcommand{\ol}{\overline}
\newcommand{\Lie}{{\rm Lie}}
\newcommand{\PGL}{{\rm PGL}}
\newcommand{\Gr}{{\rm Gr}}

\newcommand{\into}{{\,\hookrightarrow\,}}
\newcommand{\onto}{{\,\twoheadrightarrow\,}}
\newcommand{\isoto}{{\,\overset\sim\longrightarrow\,}}

\newcommand{\z}{{Z}}
\renewcommand{\Lambda}{\textstyle\bigwedge}
\newcommand{\tftf}{{\mathfrak t}}

\newcommand{\Pf}{\operatorname{Pf}}
\newcommand{\im}{\operatorname{im}}
\newcommand{\rk}{\operatorname{rank}}

\newcommand{\Mat}{{\rm Mat}}
\newcommand{\Skew}{{\rm skew}}
\newcommand{\nd}{{\rm nd}}

\newcommand{\TT}{{\sf T}}

\newcommand{\dgam}{{\hs{\gamma}}}
\newcommand{\scdot}{\!\cdot\!}
\newcommand{\sdot}{{\kern-0.05pt\cdot\kern-0.05pt}}
\newcommand{\ii}{{i}}

\newcommand{\vk}{\varkappa}
\newcommand{\gl}{\mathfrak{gl}}

\title[Complex Lie algebra isomorphic to its complex conjugate]
{Constructing a complex Lie algebra isomorphic to its complex conjugate but not definable over reals}
\author[M. Borovoi]{Mikhail Borovoi}
\address{Borovoi: School of Mathematical Sciences,
Tel Aviv University, 6997801 Tel Aviv, Israel}
\email{borovoi@tauex.tau.ac.il}

\author[W. A. de Graaf]{Willem A. de Graaf}
\address{De Graaf: Universit\`a di Trento, Dipartimento di Matematica, Via Sommarive 14, 38123 Povo TN, Italy}
\email{willem.degraaf@unitn.it}

\author[R. M. Guralnick]{Robert M. Guralnick}
\address{Guralnick: Department of Mathematics, University of Southern California, Los Angeles, CA 90089-2532, USA}
\email{guralnic@usc.edu}

\thanks{Borovoi was partially supported by the Israel Science Foundation (grant 1030/22).}
\thanks{Guralnick was partially supported by Simons Foundation Fellowship 00019819.}

\keywords{Two-step nilpotent Lie algebra,  complex field, real field, real form.
}

\subjclass{
  12D99
, 17B30
}

\date{\today}

\begin{document}

\begin{abstract}
Following an idea of Demarche and using computer calculations of the authors
and ideas of the LLM Claude Fable 5,
we construct an explicit 10-dimensional two-step nilpotent complex Lie algebra
that is isomorphic to its complex conjugate
but cannot be defined over the field of real numbers $\R$.

\end{abstract}

\maketitle

\section{Introduction}

Let $k$ be a field of characteristic 0.
A {\em Lie algebra} $L$ over $k$ is a vector space over $k$ together with  a {\em Lie bracket},
that is, a $k$-bilinear map
\[ [\,,]\colon L\times L\to L, \quad x,y\mapsto [x,y]\]
satisfying the following conditions (see \cite{Jacobson}):
\begin{align}
&[x,y]=-[y,x]\quad\ \text{for all}\ \,x,y\in L;\label{e:xy}\\
&[x,[y, z]\hs]+[y,[z,x]\hs] + [z,[x,y]\hs] =0 \quad\ \text{for all}\ \,x,y,z\in L.\label{e:Jacobi}
\end{align}
A {\em two-step nilpotent Lie algebra} $L$ over $k$ is a  vector space  over $k$ together with a $k$-bilinear map
$ [\,,]\colon L\times L\to L$ satisfying \eqref{e:xy} and the following condition:
\begin{equation}
[x,[y,z]\hs]=0 \quad\ \text{for all}\ \,x,y,z\in L.\label{e:2-step}
\end{equation}
Clearly, \eqref{e:2-step} implies \eqref{e:Jacobi},
and so any two-step nilpotent Lie algebra is a Lie algebra.

Let $L$ be a Lie algebra over $k$. The {\em center} of $L$ is the $k$-subspace
\[ Z(L)=\{x\in L\ \,\big|\ \,[x,y]=0 \ \, \text{for all}\ y\in L\}.\]
A Lie algebra $L$ is two-step nilpotent if and only if
$[\hs[L,L],L]=0$, that is, $[L,L]\subseteq Z(L)$.
We say that a two-step nilpotent Lie algebra is {\em non-degenerate} if
\begin{equation*} 
[L,L]=Z(L).
\end{equation*}

In this paper, all vector spaces and Lie algebras are finite-dimensional.
Let $L$ be a Lie algebra of dimension $d$ over $k$, and let $e_1,\dots,e_d$ be a basis of $L$.
Then we may write
\[[e_p,e_q]=\sum_{r=1}^d c_{p,q}^r e_r\quad \ \text{for}\ \, 1\le p<q \le d\quad\text{and some}\ \, c_{p,q}^r\in k. \]
The elements $c_{p,q}^r$ are called the {\em structure constants} of $L$
with respect to the basis $e_1,\dots,e_d$\hs.
They uniquely determine the isomorphism class of $L$.

We will consider {\em complex Lie algebras}, that is, Lie algebras over the field $\C$ of complex numbers,
and {\em real Lie algebras}, that is, Lie algebras over  the field $\R$ of real numbers.
By the {\em complex conjugate} of a finite-dimensional complex Lie algebra $L$ 
we mean the complex Lie algebra obtained from $L$
by applying complex conjugation to the structure constants of $L$ (with respect to a chosen basis).
See Section \ref{s:2-C} for a construction not using a choice of a basis.

Let $L_\R$ be a finite-dimensional {\em real} Lie algebra.
Then for the complex Lie algebra $L=L_\R\otimes_\R\C$
there is a basis with respect to which all structure constants are real,
and therefore $L$ is isomorphic to its complex conjugate.
Jonas Der\'e proposed the following conjecture:

\begin{conjecture}[{\cite[Conjecture 1 on page 494]{Dere}}]
\label{cj:Dere}
Assume that a finite-dimensional complex Lie algebra $L$ is isomorphic to its complex conjugate. Then
$L$ can be defined over $\R$, that is, $L\simeq L_\R\otimes_\R\C$ for some real Lie algebra $L_\R$\hs.
\end{conjecture}

Such a real Lie algebra $L_\R$ is called a {\em real form} of $L$.
Thus Conjecture \ref{cj:Dere} says that if $L$ is isomorphic to its complex conjugate,
then $L$ {\em admits a real form}, that is, there exists a real form of $L$.

Cyril Demarche \cite[Theorem 1]{Demarche} disproved this conjecture.
He proved that {\em there exists} a $10$-dimensional two-step nilpotent complex Lie algebra $L$ 
that is isomorphic to its complex conjugate
but admits no real form. However, Demarche did not produce an explicit example of such a Lie algebra $L$.
In this paper we produce such an example.

\begin{theorem}\label{t:main}
Let $V_{10}=\C^{10}$ with the standard basis $e_1,\dots,e_{10}$\hs,
and let $V_{10}^*$ be the dual space with the dual basis $e_1^*,\dots, e_{10}^*$.
Let $\Lambda^2 V_{10}^*$ denote the space of skew-symmetric bilinear forms on $V_{10}$.
Consider the complex subspaces $V_6=\langle e_1,\dots, e_6\rangle$ and $V_4=\langle e_7,\dots, e_{10}\rangle$.
Consider the following tensors in $\Lambda^2 V_6^*$  (skew-symmetric bilinear forms on $V_6$):
\begin{equation}\label{e:WW}
\begin{aligned}
&w_1= e_4^*\wedge e_5^*+ e_3^*\wedge e_6^*\hs,\\
&w_2=-e_4^*\wedge e_6^*-e_3^*\wedge e_5^*+e_2^*\wedge e_6^*
+e_2^*\wedge e_5^*-e_1^*\wedge e_6^*+e_1^*\wedge e_5^*\hs,\\
&w_3=-e_4^*\wedge e_6^*-e_3^*\wedge e_5^*+e_2^*\wedge e_5^*
-e_2^*\wedge e_3^*-e_1^*\wedge e_6^*+e_1^*\wedge e_4^*\hs,\\
&w_4=-\tfrac{i}{2}\,e_2^*\wedge e_4^*+\tfrac{i}{2}\,e_1^*\wedge e_3^*
+e_1^*\wedge e_2^*\hs,
\end{aligned}
\end{equation}
where $i^2=-1$.
Consider the following tensor in $(\Lambda^2 V_6^*)\otimes V_4\subset (\Lambda^2 V_{10}^*)\otimes V_{10}$:
\begin{equation}\label{e:tf}
\tftf=\tftf(w_1,w_2,w_3,w_4)= w_1\otimes e_7+ w_2\otimes e_8+w_3\otimes e_9+w_4\otimes e_{10}\hs.
\end{equation}
Consider the Lie bracket on $V_{10}$ induced by $\tftf$:
\[ [\, ,]_\tftf\colon V_{10}\times V_{10}\to V_{10},
   \quad\ [x,y]=\sum_{\ell=1}^4 w_\ell(x,y)\hs e_{6+\ell}\hs, \]
where we regard the bilinear forms $w_\ell$ on $V_6$ as bilinear forms on $V_{10}=V_6\oplus V_4$
via the projection $V_{10}\to V_6$ with kernel $V_4$.
We obtain a 10-dimensional two-step nilpotent Lie algebra $L=L_\tftf\coloneqq(V_{10},[\,,]_\tftf)$\hs.
Then  (i)~$L$ is isomorphic to its complex conjugate, but (ii)~$L$ admits no real form.
\end{theorem}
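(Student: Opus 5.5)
The plan is to translate both statements into the language of the 4-dimensional subspace $W=\langle w_1,w_2,w_3,w_4\rangle\subset\Lambda^2V_6^*$. First I check that $L$ is non-degenerate: $[L,L]=V_4$ because $w_1,\dots,w_4$ are linearly independent, and $Z(L)=V_4$ because no nonzero $x\in V_6$ lies in the common radical of the $w_\ell$. For non-degenerate two-step nilpotent algebras $L_\tftf$ and $L_{\tftf'}$ of the same type $(6,4)$, an isomorphism preserves $Z(L)=[L,L]=V_4$ and induces a map on $L/Z(L)\cong V_6$. Hence $L_\tftf\simeq L_{\tftf'}$ if and only if some $g\in\GL(V_6)$ carries $W$ to $W'$. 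The complex conjugate algebra corresponds to $\ol W$, the span of $\ol{w_1},\ol{w_2},\ol{w_3},\ol{w_4}$; since $w_1,w_2,w_3$ are real, $\ol W=\langle w_1,w_2,w_3,\ol{w_4}\rangle$. Similarly, $L$ has a real form if and only if $W=g\cdot W_0$ for some real $4$-dimensional subspace $W_0$. Equivalently, there must be an antiholomorphic involution $\sigma=g\circ c$ of $V_6$ (with $c$ the standard conjugation) that preserves $W$.

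For part (i), I will exhibit an explicit $g\in\GL_6(\C)$ with $g\cdot W=\ol W$. To find it, I would use the $\GL(V_6)$-invariant geometry. The degree-3 Pfaffian form $\Pf$ restricted to $W$ defines a cubic surface in $\mathbb P(W)\cong\mathbb P^3$, and the degenerate forms there have rank $\le4$. Matching this geometry for $W$ and $\ol W$ pins down the candidate map, and I would then solve the linear equations for $g$ (computer-assisted, as the abstract suggests). The final verification is a direct computation: $g^*\ol{w_\ell}$ lies in $W$ for each $\ell$.

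For part (ii), which is the main obstacle, I would follow Demarche's obstruction. Let $G=\Stab_{\GL(V_6)}(W)$, or better its image acting on $W$. Then $g$ from (i) gives a semilinear automorphism $\tau=g\circ c$ of the pair $(V_6,W)$, and $\tau^2=g\ol g\in G$. A real form exists if and only if $\tau$ can be modified to $h\tau$ with $h\in G$ and $(h\tau)^2=1$. This is a nonabelian $H^1$/Galois-descent question: the obstruction is a class in $H^2$ of $\mathrm{Gal}(\C/\R)$ with coefficients in the outer part of $G$.

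My concrete plan is to compute $G$ explicitly. I would first find its identity component $G^\circ$, expected to be small (scalars together with perhaps a torus), and then its finite component group $A=G/G^\circ$, using the invariants from the cubic surface (Pfaffian locus, rank-4 points, kernel lines). If the extension yields $A\cong\mathbb Z/4$, or some group in which $\tau$ acts so that $(h\tau)^2$ always lands in a class that is not a square, the argument is complete. The typical shape is that every semilinear $\tau'$ preserving $W$ satisfies $\tau'^2\in G$ projecting to a fixed nontrivial element of $A$. That element is not of the form $a\,{}^\tau a$, so $\tau'^2\neq1$. I expect the hard part to be determining $G$ rigorously, namely proving that no further stabilizer elements exist. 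I would attack this by showing that $G$ must permute a finite canonical configuration of points in $\mathbb P(W)$ (singular points or rank-$\le 4$ locus) and kernels in $V_6$, and that fixing this configuration reduces $G$ to a computable group.
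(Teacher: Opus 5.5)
There is a genuine gap in part (ii). Your reduction to $W$ and your descent criterion are correct, and they are what Section \ref{s:2-C} establishes: a real form exists iff some $h\tau$ with $h\in G=\Stab_{\GL(V_6)}(W)$ is an antilinear involution. What fails is the mechanism you forecast, so your outline would not reach a contradiction.

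Here $G=\C^\times\cdot\Id_6$ exactly (Theorem \ref{t:Stab}). So $G$ is connected and $A=G/G^\circ$ is trivial. There is no nontrivial component for $\tau'^2$ to project to, and any obstruction computed in the component group or an ``outer part'' of $G$ vanishes. The obstruction lives in the scalars themselves, in $H^2(\mathrm{Gal}(\C/\R),\C^\times)=\R^\times/\R_{>0}$, which is the Brauer class of the quaternions.

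Concretely, $W$ was built so that the antilinear map $w\mapsto J^*(\upgam w)$ fixes $iw_1,w_2,w_3,w_4$ (Lemma \ref{l:(b)}). Here $J=\mathrm{diag}(j,j,j)$ with $j=\begin{pmatrix}0&1\\-1&0\end{pmatrix}$. This gives (i) with $g=J$. The semilinear map $\tau=J\circ c$ then satisfies $\tau^2=J\,\upgam J=-\Id_6$. Since $G$ consists of scalars, every antilinear automorphism of $(V_6,W)$ is $\lambda\tau$, and $(\lambda\tau)^2=-\lambda\,\upgam\lambda\,\Id_6\neq\Id_6$. This is Proposition \ref{p:real-orbit}.

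Your suggestion to replace $G$ ``better'' by its image on $W$ would destroy exactly this information. Since $-\Id_6$ acts trivially on $\Lambda^2V^*$, the antilinear map induced on $W$ is an honest involution, and no obstruction is visible there.

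The other substantive step, proving $G=\C^\times\cdot\Id_6$, remains an unspecified plan in your proposal. Your idea of using the Pfaffian cubic on $\mathbb P(W)$ together with kernel data is in the spirit of the paper, which proceeds as follows:
\begin{enumerate}
\item[(1)] The image of $G$ in $\GL(W)$ preserves $\C^\times\cdot f$, where $f=\Pf(A(x))$.
\item[(2)] A Gr\"obner basis computation gives $\Stab_{\GL(W)}(\C^\times f)=\C^\times\cdot\{\Id_4,\phi\}$ (Proposition \ref{p:Stab(f)}).
\item[(3)] For each $M\in\{\Id_4,\phi\}$, the condition $gA(x)g^\TT=c\,A(Mx)$ for all $x$ is linearized to $n^\TT g\,u=0$ for $n\in\ker A(Mx)$ and $u\in\im A(x)$. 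This forces $g$ to be scalar for $M=\Id_4$ and $g=0$ for $M=\phi$ (Lemma \ref{l:M}).
\end{enumerate}
Without this computation and without the explicit $J$ with $J\,\upgam J=-\Id_6$, your outline does not yet establish either (i) or (ii).
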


Here (i) can be easily checked by an explicit calculation, and (ii) is  proved
in the paper, following an idea of Demarche
and using computer calculations of the authors based on ideas of the LLM (Large Language Model)  Claude Fable 5.
Our $10$-dimensional Lie algebra $L=L_\tftf$ is a  two-step nilpotent Lie algebra.
Indeed, it is clear from \eqref{e:WW} and \eqref{e:tf} that
\[ [L,L]\subseteq V_4\subseteq Z(L).\]
Moreover, it is easy to check using a computer that
\[ [L,L]= V_4=Z(L),\]
that is, $L$ is a {\em non-degenerate} two-step nilpotent Lie algebra.

See Construction \ref{cons:W}  below for our method
of constructing the set of tensors \eqref{e:WW}.
Moreover, in Remark \ref{cons:W'} we describe a method permitting one to construct
other sets of tensors $w_1',\dots,w_4'\in \Lambda^2 V_6^*$
such that Theorem \ref{t:main} holds for $L'=L_{\tftf'}$ with $\tftf'=\tftf(w_1',w_2',w_3', w_4')$
as in \eqref{e:tf}.

\begin{remark}
It follows from \cite[Tables 2 and 8]{GT} (see also \cite{BDG} for dimension 8 only)
that in every complex two-step nilpotent Lie algebra
of dimension $\le 8$, there exists a basis in which all structure constants are either 0 or 1,
and so the Lie algebra can be defined over $\R$ and is isomorphic to its complex conjugate.
In dimension 9, there exist complex two-step nilpotent Lie algebras $L$
such that $L$ is not isomorphic to its complex conjugate, but computer computations 
of the second-named author
show that for complex two-step nilpotent Lie algebras of dimension 9,
Conjecture \ref{cj:Dere} holds.
Thus $d=10$ is the lowest dimension $d$ such that
for complex two-step nilpotent Lie algebras of dimension $d$,
Conjecture \ref{cj:Dere} fails.
By Theorem \ref{t:m} below, Conjecture \ref{cj:Dere} fails also for all $d>10$.
\end{remark}

\begin{theorem}\label{t:m} 
Let $L$ be the 10-dimensional non-degenerate two-step nilpotent complex Lie algebra 
constructed in Theorem \ref{t:main}.
For any integer $m>0$, consider the {\em degenerate} 
two-step nilpotent complex Lie algebra $L'=L\times \C^m$ of dimension $10+m$,  
where we regard $\C^m$ as an $m$-dimensional abelian complex Lie algebra.
Then (i) $L'$ is isomorphic to its complex conjugate,  but (ii) $L'$ admits no real form.
\end{theorem}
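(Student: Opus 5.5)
Part (i) is immediate from Theorem \ref{t:main}(i). If $\varphi\colon L\isoto \bar L$ is an isomorphism of complex Lie algebras, then $\varphi\times\mathrm{id}$ is an isomorphism $L\times\C^m\isoto \bar L\times\ol{\C^m}=\overline{L'}$, since the abelian algebra $\C^m$ has zero structure constants and is therefore its own complex conjugate.

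For (ii) we argue by contradiction. Suppose $L'\simeq L'_\R\otimes_\R\C$ for some real Lie algebra $L'_\R$. The idea is to split off the abelian factor intrinsically, so that the splitting also takes place over $\R$. Since $L$ is non-degenerate, we have $[L',L']=[L,L]=V_4=Z(L)$ and $Z(L')=Z(L)\times\C^m$, so $\dim Z(L')/[L',L']=m$. Over $\R$, take $Z_\R=Z(L'_\R)$ and $D_\R=[L'_\R,L'_\R]$. Both constructions commute with the base change $\otimes_\R\C$: the center is the kernel of the linear map $x\mapsto \mathrm{ad}\,x$, and the derived algebra is the image of the bracket, and kernels and images of $\R$-linear maps commute with extension of scalars. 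Hence $D_\R\subseteq Z_\R$ are real forms of $[L',L']$ and $Z(L')$.

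Choose a real subspace $A_\R\subseteq Z_\R$ complementary to $D_\R$, so $\dim A_\R=m$, and choose a real subspace $M_\R\supseteq D_\R$ complementary to $A_\R$ in $L'_\R$. Then $M_\R$ is an ideal, because $[L'_\R,L'_\R]=D_\R\subseteq M_\R$. Moreover $L'_\R=M_\R\oplus A_\R$ is a direct product of Lie algebras, because $A_\R$ is central and $A_\R\cap M_\R=0$. Complexifying gives $L'\simeq M\times\C^m$ with $M=M_\R\otimes\C$, and $M$ is non-degenerate two-step nilpotent of dimension $10$: its center is $Z(L')\cap M=[L',L']=[M,M]$, since $Z(L')=D\oplus A$ and $D\subseteq M$.

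The key step, and really the only substantive point, is to show that $M\simeq L$. For this we would prove cancellation directly. If $L'=N\times \C^m$ is any decomposition with $N$ non-degenerate, then the projection $\pi\colon L'\to L'/Z(L')\oplus$ is not quite the right tool. Instead, compose the inclusion $N\into L'$ with the quotient map $L'\to L'/A$, where $A=\C^m$ is the given abelian factor of $L'=L\times A$. This gives a Lie algebra homomorphism $N\to L'/A\simeq L$. It is injective: its kernel is $N\cap A$, which lies in $Z(N)=[N,N]\subseteq[L',L']=V_4$, and $V_4\cap A=0$. It is also surjective, since both sides have dimension $10$. Taking $N=M$ gives $M\simeq L$, so $L\simeq M_\R\otimes\C$ admits a real form. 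This contradicts Theorem \ref{t:main}(ii) and proves (ii).
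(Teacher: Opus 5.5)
Your argument is correct and follows the paper's route. You split off the abelian factor of the real form as in Proposition~\ref{p:nd}(i), observe that the center and the derived algebra commute with $\otimes_\R\C$, and then cancel $\C^m$ as in Propositions~\ref{p:nd}(ii) and~\ref{p:dg-R-C}; your cancellation via $N\hookrightarrow L'\twoheadrightarrow L'/A\simeq L$, with kernel $N\cap A\subseteq Z(N)=[N,N]\subseteq[L',L']$ meeting $A$ trivially, is a compact version of the paper's isomorphism $\sigma\oplus\mathrm{Id}_{[L,L]}$ (delete the stray half-sentence about the projection $\pi$, and state explicitly that $N$ is the image of $M$ under the chosen isomorphism $L'_\R\otimes_\R\C\isoto L'$).
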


Let $U$ be a complex linear algebraic group. 
We write $\dgam U$ for the complex conjugate of $U$,
that is, for the complex linear algebraic group obtained from $U$ 
by transport of scalars by the complex conjugation  $\gamma\in{\rm Gal}(\C/\R)$.

By a {\em real form} of $U$ we mean a real algebraic group $U_\R$
such that $U\simeq  U_\R\times_\R \C$.
If $U$ admits a real form, then $\dgam U\simeq U$.
Below we construct an {\em explicit example} of a complex algebraic group $U$
such that $\dgam U\simeq U$ but $U$ admits no real form;
cf. \cite[Corollary 7]{Demarche} where the {\em existence} of such $U$ was proved.

\begin{example}\label{ex:U}
For the Lie algebra $L$ of Theorem \ref{t:main},
let $v\oplus w\in V_6\oplus V_4=V_{10}=L$.
Denote by $\Phi(v)$ the $4\times 6$-matrix such that
\[\Phi(v)\cdot v'=\frac12 [v,v']\]
where $v,v'\in V_6\subset L$ are regarded as {\em column} vectors.
Let $\rho(v\oplus w)$ denote the $11\times 11$  block matrix
\[
\begin{pmatrix}
0_{4\times 4} &\Phi(v)       &w\\
0             &0_{6\times 6} &v\\
0             &0             &0
\end{pmatrix}.
\]
One checks that 
\[ \big[\hs\rho(v\oplus w),\hs\rho(v'\oplus w')\hs\big]=\rho\big(\hs0\oplus[v,v']\hs\big)
     =\rho\big(\hs[v\oplus w, v'\oplus w']\hs\big),\]
that is, $\rho$ is a faithful 11-dimensional representation 
of our 10-dimensional Lie algebra $L$.
Moreover, one can check that for the matrix $\rho(v\oplus w)$ we have $\rho(v\oplus w)^2=0$,
whence  $\exp \rho(v\oplus w)=1 +\rho(v\oplus w)$
and  $\exp \rho(L)$ is the set of matrices of the form
\begin{equation}\label{e:U}
\begin{pmatrix}
I_4 &\Phi(v) &w\\
0 & I_6 &v\\
0 &0 & 1
\end{pmatrix} \quad\ \text{for} \ \,v\oplus w\in V_6\oplus V_4=V_{10}=L.
\end{equation}
This is a subgroup of $\GL(11,\C)$ with the multiplication law 
\begin{equation*}
\begin{pmatrix}
I_4 &\Phi(v) &w\\
0 & I_6 &v\\
0 &0 & 1
\end{pmatrix}
\begin{pmatrix}
I_4 &\Phi(v') &w'\\
0 & I_6 &v'\\
0 &0 & 1
\end{pmatrix}
=
\begin{pmatrix}
I_4 &\Phi(v+v') &w+w'+\frac12 [v,v']\\
0 & I_6 &v+v'\\
0 &0 & 1
\end{pmatrix}.
\end{equation*}

Let $U\subset \GL_{11,\C}$ be the unipotent complex algebraic group 
whose group of $\C$-points is given by \eqref{e:U}.
Then for the Lie algebra of $U$ we have $\Lie\, U=\rho(L)\simeq L$,
and by the classical correspondence between nilpotent Lie algebras and unipotent algebraic groups
(\hs see \cite[Theorem 14.37]{Milne}\hs) we obtain from Theorem \ref{t:main} the following corollary. 
\end{example}

\begin{corollary}
The 10-dimensional unipotent complex algebraic group $U$  of Example \ref{ex:U} 
is isomorphic to its complex conjugate but admits no real form.
This group is a central extension of $(\G_{a,\C})^6$ by $(\G_{a,\C})^4$,
where $\G_{a,\C}$ denotes the additive group over $\C$.
\end{corollary}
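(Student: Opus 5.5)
The plan is to derive the Corollary directly from Theorem \ref{t:main} via the equivalence between nilpotent Lie algebras and unipotent algebraic groups in characteristic zero. Over any field $k$ of characteristic $0$, the functor $U\mapsto \Lie\, U$ is an equivalence from unipotent algebraic $k$-groups to finite-dimensional nilpotent Lie algebras over $k$ (\cite[Theorem 14.37]{Milne}). This functor commutes with base change: $\Lie(U_\R\times_\R\C)\simeq (\Lie\, U_\R)\otimes_\R\C$. It also commutes with transport of structure by $\gamma$: $\Lie(\dgam U)\simeq \dgam(\Lie\, U)$, where $\dgam$ on the right is the complex conjugate Lie algebra of Section \ref{s:2-C}. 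Concretely, if $U\subset\GL_{11,\C}$ is cut out by polynomials, then $\dgam U$ is cut out by the conjugated polynomials, and its Lie algebra is the conjugate subspace of $\gl_{11}$ with conjugated structure constants.

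\emph{Isomorphism with the conjugate.} By Example \ref{ex:U} we have $\Lie\, U=\rho(L)\simeq L$. Hence $\Lie(\dgam U)\simeq \dgam L\simeq L\simeq \Lie\, U$ by Theorem \ref{t:main}(i). Full faithfulness of the equivalence then lifts this Lie algebra isomorphism to a group isomorphism $\dgam U\simeq U$.

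\emph{No real form.} Suppose $U\simeq U_\R\times_\R\C$ for some real algebraic group $U_\R$. Then $U_\R$ is unipotent, since unipotence can be checked after field extension. So $\Lie\, U_\R$ is a real nilpotent Lie algebra with $(\Lie\, U_\R)\otimes_\R\C\simeq \Lie\, U\simeq L$. This contradicts Theorem \ref{t:main}(ii).

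\emph{Extension structure.} From the multiplication law in Example \ref{ex:U}, the map $U\to(\G_{a,\C})^6$ sending the matrix to $v$ is a surjective homomorphism. Its kernel is $\{v=0\}\simeq(\G_{a,\C})^4$, with $w$ adding. This kernel is central, because the commutator correction $\frac12[v,v']$ only affects $w$, and $w$ commutes with everything since it enters only additively.

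I expect the only step needing care to be the compatibility $\Lie(\dgam U)\simeq\dgam(\Lie\, U)$, i.e. checking that the notion of complex conjugation used for Lie algebras in Section \ref{s:2-C} matches transport of scalars for groups. I would handle it via the explicit description of $U$ as the image of $\exp$ on $\rho(L)$, which is defined by linear equations over $\C$ whose conjugates define $\dgam U=\exp\hs\ol{\rho(L)}$.
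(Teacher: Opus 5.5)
Your proposal is correct and follows the paper's argument. The paper likewise notes $\Lie\,U=\rho(L)\simeq L$ and transfers both parts of Theorem \ref{t:main} through the unipotent-group/nilpotent-Lie-algebra equivalence \cite[Theorem 14.37]{Milne}, with the central extension read off from the multiplication law in Example \ref{ex:U}; you simply write out the routine checks that the paper leaves implicit, namely compatibility of $\Lie$ with base change and with conjugation, and centrality of the kernel $\{v=0\}$.
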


For an integer $m>0$, set $U'=U\times_\C (\G_{a,\C})^m$.
Then $\Lie\,U'\simeq L'\coloneqq L\times \C^m$, and 
from Theorem \ref{t:m} we obtain the following corollary.

\begin{corollary}
For any integer $m>0$, the unipotent complex algebraic group 
$U'=U\times_\C(\G_{a,\C})^m$ of dimension $10+m$
is isomorphic to its complex conjugate but admits no real form.
\end{corollary}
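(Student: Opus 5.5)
The last statement is the second Corollary: for $m>0$, $U'=U\times_\C(\G_{a,\C})^m$ is isomorphic to its complex conjugate but admits no real form. The plan is to reduce it to Theorem \ref{t:m}, exactly as the first Corollary reduces to Theorem \ref{t:main}. The tool is the equivalence of categories between unipotent algebraic groups and nilpotent Lie algebras over a field of characteristic zero \cite[Theorem 14.37]{Milne}. Since this equivalence is given by the functor $\Lie$ with inverse built from $\exp$, it commutes with transport of structure by $\gamma$ and with base change $\R\to\C$.

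First I identify the Lie algebra. We have $\Lie(U\times_\C(\G_{a,\C})^m)=\Lie\,U\times\Lie(\G_{a,\C})^m\simeq L\times\C^m=L'$, the second factor being abelian. For the conjugate group, $\Lie(\dgam U')\simeq\dgam(\Lie\,U')$, because taking the Lie algebra is compatible with transport of scalars by $\gamma$. Concretely, the structure constants of $\Lie(\dgam U')$ are the conjugates of those of $\Lie\,U'$.

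Next I prove part (i). By Theorem \ref{t:m}(i), $\dgam L'\simeq L'$. Both $U'$ and $\dgam U'$ are unipotent: $\dgam U'$ is unipotent because it is obtained from the unipotent group $U'$ by transport of scalars. Their Lie algebras are isomorphic, so full faithfulness of the equivalence gives $\dgam U'\simeq U'$. More explicitly, one can write down the matrix group $\exp\rho'(L')$ obtained by taking the block sum of the representation $\rho$ of Example \ref{ex:U} with a faithful unipotent representation of $\C^m$.

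Then I prove part (ii), by contradiction. Suppose $U'\simeq U'_\R\times_\R\C$ for some real algebraic group $U'_\R$. Then $U'_\R$ is unipotent, since unipotence can be checked after base change. Hence $L'_\R\coloneqq\Lie\,U'_\R$ is a real Lie algebra with $L'_\R\otimes_\R\C\simeq\Lie\,U'\simeq L'$, so $L'_\R$ is a real form of $L'$. This contradicts Theorem \ref{t:m}(ii).

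The main point requiring care is the compatibility of $\Lie$ with Galois twisting and base change; everything else is formal. For this compatibility, I would note that for a closed subgroup of $\GL_n$, the Lie algebra is computed by polynomial equations, so applying $\gamma$ to the coefficients of those equations produces exactly the conjugate Lie subalgebra of $\gl_n$.
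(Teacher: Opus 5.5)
Your proposal is correct and follows the paper's route. The paper likewise notes $\Lie\,U'\simeq L\times\C^m=L'$ and transfers Theorem \ref{t:m} through the equivalence between unipotent groups and nilpotent Lie algebras in characteristic zero \cite[Theorem 14.37]{Milne}; your remarks on the compatibility of $\Lie$ with Galois twisting and base change just spell out what the paper leaves implicit.
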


The following two remarks are due to Boris Kunyavski\u{\i} (private communication).

\begin{remark}[Associative algebras]
\label{r:K1}
For a two-step nilpotent complex Lie algebra $L$,
consider the  complex algebra without unit $A_L=L$ with multiplication 
\[x\cdot y=[x,y].\]
The algebra $A_L$ is clearly associative, and we have $x^2=0$ for any $x\in A_L$.
One can easily see that a real form $(A_L)_\R$ of $A_L$, if it exists, corresponds to a real 
form $L_\R$ of $L$. Now taking for $L$ the Lie algebra of Theorem \ref{t:main},
we obtain an example of a 10-dimensional associative algebra without unit 
that is isomorphic to its complex conjugate but admits no real form.
\end{remark}

\begin{remark}[Associative algebras with unit]
\label{r:K2}
For the associative complex algebra $A_L$ (without unit) of Remark \ref{r:K1},
consider the unitized algebra $U_L=A_L\oplus\C\sdot1$ with the multiplication law
\[(a_1+\lambda_1 \sdot1)(a_2+\lambda_2\hs \sdot1)
   =a_1 a_2 +\lambda_1 a_2 +\lambda_2 a_1 +\lambda_1\lambda_2\hs\sdot 1
   \quad\ \text{for}\ \, a_1,a_2\in A_L,\ \lambda_1,\lambda_2\in\C.\]
Then $1$ is the unit of $U_L$. 
Consider the Jacobson radical $J(U_L)$
(the intersection of all maximal left ideals of $U_L$);
then  $J(U_L)=A_L$.
One can easily see that for a real form $(U_L)_\R$ of $U_L$, if it exists,
its Jacobson radical $J((U_L)_\R)$ satisfies 
\[ J\big((U_L)_\R\big)\otimes_\R\C=J(U_L)=A_L\hs,\]
and therefore the associative algebra $J\big((U_L)_\R\big)$ is a real form of $A_L$.
Now we take for $L$ the 10-dimensional two-step nilpotent 
complex Lie algebra of Theorem \ref{t:main}.
Then by Remark \ref{r:K1} the associative algebra  $A_L$ 
is isomorphic to its complex conjugate but admits no real form,  
and therefore the 11-dimensional {\em associative algebra with unit} $U_L$
is isomorphic to its complex conjugate but admits no real form.
\end{remark}

\subsection*{Assumptions and notation} 
In this paper, all vector spaces and Lie algebras are finite-dimensional
over fields of characteristic 0, and starting from Section \ref{s:2-C}, over $\C$.
By a {\em real form} of a complex Lie algebra $L$
we mean a real Lie algebra $L_\R$ such that $L_\R\otimes_\R \C\simeq L$.
We denote by $V^*$ the dual space to a vector space $V$, by $\Lambda^2 V$ the second exterior power of $V$,
and by $\Lambda^2 V^*\cong\big(\Lambda^2 V\big)^*$ the space of skew-symmetric bilinear forms on $V$.

\subsection*{Plan of the paper}
The plan of the rest of this paper is as follows. 
Let $L$ be a non-degenerate two-step nilpotent Lie algebra
over a field $k$ of characteristic 0.
Write $V=L/Z$ where $Z=Z(L)$;
then we have a natural skew-symmetric  bilinear map $V\times V\to Z$, 
inducing a  linear map $\psi_L\colon \Lambda^2 V\onto Z$,
which is surjective because $L$ is non-degenerate.
Set $F=\ker\psi_L\subset \Lambda^2 V$; then the pair $(V,F)$ is non-degenerate 
in the sense of Definition \ref{d:non-deg-F}.
Consider the dual space $\Lambda^2 V^*$ to $\Lambda^2 V$
and let $W=F^\bot\subset \Lambda^2 V^*$, the orthogonal complement to $F$.
Then $(V,W)$ is a $*${\em-pair}, that is, 
a pair where $V$ is a vector space and $W\subset\Lambda^2 V^*$.
This $*$-pair is non-degenerate in the sense of Definition \ref{d:*-pair}.
In Section \ref{s:V-F} we construct 
a non-degenerate two-step nilpotent  Lie algebra $L=L(V,F)$ 
from a non-degenerate pair $(V,F)$ with $F\subset \Lambda^2 V$, or equivalently, 
from the corresponding non-degenerate $*$-pair $(V,W)$.

In Section \ref{s:2-C}, for $k=\C$ and  a non-degenerate $*$-pair $(\C^n,W)$, 
we give a necessary and sufficient condition in terms of $W$
for the existence of a real form $L_\R$ of $L=L(\C^n,W)$.
In Section \ref{s:F} we prove Theorem \ref{t:main} 
modulo Theorem \ref{t:Stab} 
which asserts that for the 4-dimensional subspace $W\subset\Lambda^2 V_6^*$
generated by the tensors $w_1,\dots,w_4$ of \eqref{e:WW},
the stabilizer in $\PGL(6,\C)$
of $W\in \Gr(4,\Lambda^2 V_6^*)$ is trivial, 
where $\Gr(4,\Lambda^2 V_6^*)$ is the Grassmannian 
of 4-dimensional subspaces in $\Lambda^2 V_6^*$.
After that, we deduce Theorem \ref{t:m} from Theorem \ref{t:main}.
In Section \ref{s:Stab-W} we prove Theorem \ref{t:Stab} using computer calculations.
In the arXiv version of this paper, in Appendix \ref{app:code}
we provide the program code used in computer calculations.

\section{Constructing a two-step nilpotent Lie algebra}
\label{s:V-F}

Let $k$ be a field of characteristic 0.
In this section, all vector spaces and Lie algebras are defined over $k$.
The following two constructions are due to Demarche \cite{Demarche}.

\begin{construction}\label{con:L-to-VF}
Let $L$ be a two-step nilpotent Lie algebra.
Write $Z=Z(L)$ and set  $V_L=L/Z$.
Then we have a canonical skew-symmetric bilinear map
\begin{equation}\label{e:L-L-Z}
L\times L\to Z, \quad\ (x,y)\mapsto [x,y]\in Z,
\end{equation}
which induces an embedding
\begin{equation}\label{e:[L,L]-into-Z} 
[L,L]\into Z
\end{equation}
where by definition $[L,L]$ is the subspace of $L$ generated by the brackets 
$[x,y]$ with $x,y\in L$.


Let $L$ be a non-degenerate two-step nilpotent Lie algebra.
The map \eqref{e:L-L-Z} clearly induces a  skew-symmetric bilinear map
\[V_L\times V_L \to Z\]
and a {\em linear} map
\[\psi_L\colon \Lambda^2 V_L\to Z,\]
which is surjective because $L$ is non-degenerate.
Set
\[ F_L=\ker \psi_L\subset\Lambda^2 V_L\hs;\]
then
\begin{equation}\label{e:Z-Lambda2}
Z\cong \big(\Lambda^2 V_L\big)/F_L.
\end{equation}
Thus from a non-degenerate two-step nilpotent Lie algebra $L$ we obtain a pair $(V_L, F_L)$.
\end{construction}

\begin{definition}\label{d:non-deg-F}
Let $V$ be a vector space over $k$, and $F\subset \Lambda^2 V$ be a $k$-subspace.
We say that the pair $(V,F)$ is  {\em non-degenerate} if 
\begin{equation}\label{e:non-deg-F}
\text{for any nonzero $v\in V$ there exists $v'\in V$ with $v\wedge v'\notin F$.}
\end{equation}
\end{definition}

Observe that the pair $(V_L, F_L)$ obtained from a non-degenerate Lie algebra $L$ as above is non-degenerate.
Indeed, assume that $v\in V_L$ is such that  $v\wedge v'\in F_L$ for all $v'\in V_L$,
and let $x\in L$ be a preimage of $v$.
Then $[x,x']=0$ for all $x'\in L$, whence $x\in Z$ and $v=0$.

\begin{construction}\label{con:VF-to-L}
Let $V$ be a finite-dimensional vector space over $k$,
and let $F\subset \Lambda^2 V$ be a  linear subspace
such that the pair $(V,F)$ is non-degenerate.
Consider the Lie algebra $L(V,F)$  with underlying vector space
$V\oplus (\Lambda^2 V)/F$ and with Lie bracket
\begin{equation}\label{e:Lie-bracket}
\big[ (v_1,x_1), (v_2,x_2)\big]=(\hs 0,\, v_1\wedge v_2+F\hs)
\end{equation}
where $v_1,v_2\in V$,  $x_1,x_2\in (\Lambda^2 V)/F$, and  $v_1\wedge v_2+F\in (\Lambda^2 V)/F$.
Then for all $y_1,y_2,y_3\in L(V,F)$ we have
\begin{align*}
& [y_1,y_2]=-[y_2,y_1],  \\
& [\,[y_1,y_2],y_3]=0.
\end{align*}
Thus $L(V,F)$ is a two-step nilpotent Lie algebra.
\end{construction}

\begin{lemma}\label{l:ZZ}
For a non-degenerate pair $(V,F)$ as in Definition \ref{d:non-deg-F}, consider 
the  two-step nilpotent Lie algebra $L(V,F)$ as in Construction \ref{con:VF-to-L}.
Write  $Z= \z\big(L(V,F)\big)$.
Then: 
\begin{enumerate}
\item[\rm(i)] $Z=0\oplus(\Lambda^2 V)/F$;
\item[\rm(ii)] The two-step nilpotent Lie algebra $L=L(V,F)$ is non-degenerate,
that is, $[L,L]=Z$.
\end{enumerate}
\end{lemma}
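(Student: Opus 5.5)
The plan is to compute the center directly from the bracket \eqref{e:Lie-bracket} and then read off (ii) from (i) together with the obvious description of $[L,L]$.

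For (i), I would prove the two inclusions separately. The inclusion $0\oplus(\Lambda^2 V)/F\subseteq Z$ is immediate. For $y=(0,x)$ and any $(v_2,x_2)$, formula \eqref{e:Lie-bracket} gives $[(0,x),(v_2,x_2)]=(0,\,0\wedge v_2+F)=0$.

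For the reverse inclusion, take $y=(v,x)\in Z$. Then for every $v'\in V$ we have
\[ 0=[(v,x),(v',0)]=(0,\,v\wedge v'+F), \]
so $v\wedge v'\in F$ for all $v'\in V$. If $v\neq0$, the non-degeneracy condition \eqref{e:non-deg-F} of Definition \ref{d:non-deg-F} supplies some $v'$ with $v\wedge v'\notin F$, which is a contradiction. Hence $v=0$ and $y\in 0\oplus(\Lambda^2V)/F$. This is the only place where non-degeneracy of $(V,F)$ is used.

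For (ii), by \eqref{e:Lie-bracket} every bracket lies in $0\oplus(\Lambda^2 V)/F$, so $[L,L]\subseteq Z$ by (i). Conversely, $[L,L]$ is the span of the brackets, and these include all classes $(0,\,v_1\wedge v_2+F)$. The decomposable tensors $v_1\wedge v_2$ span $\Lambda^2 V$, so their images span $(\Lambda^2V)/F$. Thus $[L,L]\supseteq 0\oplus(\Lambda^2V)/F=Z$, and so $[L,L]=Z$.

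There is no real obstacle here. The only point needing care is the reverse inclusion in (i), and it is exactly what condition \eqref{e:non-deg-F} is designed to give.
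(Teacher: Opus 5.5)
Your proposal is correct and follows essentially the same route as the paper: the easy inclusion comes straight from \eqref{e:Lie-bracket}, the reverse inclusion comes from bracketing $(v,x)$ against all $(v',0)$ and applying \eqref{e:non-deg-F}, and (ii) follows because the decomposable bivectors $v_1\wedge v_2$ span $\Lambda^2 V$.
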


\begin{proof} 
It follows from \eqref{e:Lie-bracket} that 
\[0\oplus(\Lambda^2 V)/F\hs\subseteq \hs Z\big(L(V,F)\big).\]
We show that this inclusion is an equality.
Indeed, for $(v,x)\in V\oplus (\Lambda^2 V)/F$,
assume that $(v,x)\in Z\big(L(V,F)\big)$.
Then 
\[ \big[(v,x), (v',0)\big]=0\quad\ \text{for all }\ \, v'\in V.\]
By \eqref{e:Lie-bracket}, this means that 
\begin{equation}\label{e:v-v'}
v\wedge v'\in F\quad\ \text{for all }\ \, v'\in V.
\end{equation}
Since the pair $(V,F)$ is non-degenerate, 
from \eqref{e:v-v'} and \eqref{e:non-deg-F} we obtain that $v=0$.
Hence $(v,x)\in 0\oplus(\Lambda^2 V)/F$, which proves (i).

Since $\Lambda^2 V$ is generated by the bivectors of the form $v_1\wedge v_2$,
we see from \eqref{e:Lie-bracket} that $[L,L]=0\oplus \Lambda^2 V/F$.
Now from (i) we obtain that $[L,L]=Z$, that is, (ii) holds.
\end{proof}

\begin{proposition}\label{p:bij}
Constructions \ref{con:L-to-VF} and \ref{con:VF-to-L} induce a canonical bijection
between the set of isomorphism classes of
non-degenerate two-step nilpotent Lie algebras over $k$
and the set of isomorphism classes of non-degenerate
pairs $(V,F)$ over $k$ as in Construction \ref{con:VF-to-L}.
\end{proposition}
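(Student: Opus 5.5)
We need to check that the two constructions are well defined on isomorphism classes and are mutually inverse up to isomorphism. Here an isomorphism of pairs $(V,F)\to(V',F')$ is a linear isomorphism $g\colon V\to V'$ with $(\Lambda^2 g)(F)=F'$.

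\emph{Step 1: well-definedness.} Let $\phi\colon L\to L'$ be an isomorphism of non-degenerate two-step nilpotent Lie algebras. Then $\phi(Z(L))=Z(L')$, so $\phi$ induces a linear isomorphism $\bar\phi\colon V_L\to V_{L'}$. By construction,
\[\psi_{L'}\circ\Lambda^2\bar\phi=\phi|_Z\circ\psi_L,\]
because $\phi[x,y]=[\phi x,\phi y]$. Taking kernels gives $(\Lambda^2\bar\phi)(F_L)=F_{L'}$, since $\phi|_Z$ is injective. Conversely, let $g$ be an isomorphism of pairs $(V,F)\to(V',F')$. Then $g\oplus\overline{\Lambda^2 g}$ is a linear isomorphism $L(V,F)\to L(V',F')$, where $\overline{\Lambda^2 g}$ is the map induced on the quotients $(\Lambda^2V)/F\to(\Lambda^2V')/F'$. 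It visibly preserves the bracket \eqref{e:Lie-bracket}. By Lemma \ref{l:ZZ} and the observation after Definition \ref{d:non-deg-F}, both constructions land in the non-degenerate classes.

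\emph{Step 2: pairs to algebras to pairs.} Start from a non-degenerate pair $(V,F)$ and set $L=L(V,F)$. By Lemma \ref{l:ZZ}(i), $Z(L)=0\oplus(\Lambda^2V)/F$, so the projection gives a canonical isomorphism $V_L=L/Z\cong V$. Under this identification, $\psi_L\colon\Lambda^2V\to(\Lambda^2V)/F$ is the quotient map $v_1\wedge v_2\mapsto v_1\wedge v_2+F$. Hence $F_L=F$, and $(V_L,F_L)\cong(V,F)$.

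\emph{Step 3: algebras to pairs to algebras.} Start from a non-degenerate $L$ and form $(V,F)=(V_L,F_L)$. The Lie algebra $L$ is not canonically a direct sum, so this is the only step needing a choice, and it is the main (mild) obstacle. Choose a linear complement $V'$ of $Z$ in $L$, giving a linear isomorphism $s\colon V\to V'$ that splits the projection. Define
\[\Phi\colon L(V,F)=V\oplus(\Lambda^2V)/F\longrightarrow L,\qquad (v,x)\mapsto s(v)+\bar\psi_L(x),\]
where $\bar\psi_L\colon(\Lambda^2V)/F\isoto Z$ is induced by $\psi_L$. The map $\bar\psi_L$ is an isomorphism because $\psi_L$ is surjective, which is where non-degeneracy of $L$ enters. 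Therefore $\Phi$ is a linear isomorphism, since $L=V'\oplus Z$.

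For the bracket, central elements drop out, so
\[[\Phi(v_1,x_1),\Phi(v_2,x_2)]=[s v_1,s v_2]=\psi_L(v_1\wedge v_2)=\Phi\bigl(0,\,v_1\wedge v_2+F\bigr),\]
which is $\Phi$ applied to the bracket \eqref{e:Lie-bracket}. The isomorphism class does not depend on the choice of $s$, since we only need some isomorphism.

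Combining Steps 1–3, the two constructions induce mutually inverse maps on isomorphism classes, which gives the canonical bijection.
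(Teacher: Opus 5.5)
Your proof is correct and takes essentially the same route as the paper. The pair $\to$ algebra $\to$ pair round trip goes via Lemma \ref{l:ZZ}(i), and the algebra $\to$ pair $\to$ algebra round trip uses a chosen splitting of $L\to L/Z$ together with the isomorphism $(\Lambda^2V_L)/F_L\cong Z$; you additionally spell out well-definedness on isomorphism classes and the bracket compatibility of $\Phi$, which the paper leaves as ``clear''.
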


\begin{proof}
Clearly, these constructions induce maps of isomorphism classes.
We show that these maps are mutually inverse.

If we start with a non-degenerate pair $(V,F)$ as in Construction \ref{con:VF-to-L}, then by Lemma \ref{l:ZZ}(ii), the two-step nilpotent Lie algebra
$L(V,F)$ is non-degenerate, and clearly we have
\begin{equation*}
(\hs V_{L(V,F)},\hs F_{L(V,F)}\hs)=(V,F).
\end{equation*}

Let us now start with a non-degenerate two-step nilpotent Lie algebra $L$;
then $(V_L,F_L)$ is a non-degenerate pair.
Consider the  two-step nilpotent Lie algebra
\[ V_L\oplus \Lambda^2 V_L/F_L\hs.\]
Choose a splitting
\[\varsigma\colon V_L\to L\]
of the projection homomorphism of vector spaces
\[ L\to L/Z=V_L\hs.\]
We set
\[\tilde V_L=\varsigma(V_L)\subset L,\quad\  \tilde F_L=\varsigma_*(F_L)\subset \Lambda^2\tilde V_L\hs.\]
Then  using  \eqref{e:Z-Lambda2} we obtain
\[ L=\tilde V_L\oplus  Z\cong \tilde V_L\oplus \Lambda^2 V_L/F_L
\cong\tilde V_L\oplus \Lambda^2\tilde V_L/\tilde F_L\hs.\]
We have an isomorphism $\varsigma\colon V_L\isoto \tilde V_L$\,,
and the induced  isomorphism $\Lambda^2 V_L\isoto \Lambda^2 \tilde V_L$
sends $F_L$ to $\tilde F_L$.
Thus, after choosing a splitting $\varsigma$, we obtain an isomorphism
\[ L\simeq \tilde V_L\oplus \Lambda^2\tilde V_L/\tilde F_L\simeq V_L\oplus \Lambda^2 V_L/F_L=L(V_L,F_L).\qedhere\]
\end{proof}

\begin{corollary}
Let  $(V,F)$ and $(V',F')$ be two non-degenerate pairs 
as in Construction \ref{con:VF-to-L}.
Then  the non-degenerate two-step nilpotent Lie algebras 
$L(V,F)$ and $L(V',F')$ are isomorphic
if and only if $(V,F)$ and $(V',F')$ are isomorphic,
that is, there exists an isomorphism
$\varphi\colon V\isoto V'$ such that $\varphi_*(F)=F'$.
\end{corollary}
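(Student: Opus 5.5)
The plan is to deduce the corollary directly from Proposition \ref{p:bij}, checking both implications by hand. The bijection there is stated only on isomorphism classes, so we make the correspondence between isomorphisms explicit.

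For the ``if'' direction, let $\varphi\colon V\isoto V'$ satisfy $\varphi_*(F)=F'$, where $\varphi_*=\Lambda^2\varphi$. Then $\Lambda^2\varphi$ induces a linear isomorphism
\[\bar\varphi\colon(\Lambda^2 V)/F\isoto(\Lambda^2 V')/F'.\]
We define
\[\Phi=\varphi\oplus\bar\varphi\colon L(V,F)\to L(V',F').\]
Using the bracket formula \eqref{e:Lie-bracket}, we check that
\[\Phi[(v_1,x_1),(v_2,x_2)]=(0,\varphi v_1\wedge\varphi v_2+F')=[\Phi(v_1,x_1),\Phi(v_2,x_2)].\]
Hence $\Phi$ is a Lie algebra isomorphism.

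For the ``only if'' direction, let $\Psi\colon L=L(V,F)\isoto L'=L(V',F')$ be an isomorphism of Lie algebras. Any Lie algebra isomorphism maps the center onto the center, so $\Psi(Z(L))=Z(L')$. By Lemma \ref{l:ZZ}(i) we have $Z(L)=0\oplus(\Lambda^2V)/F$, and likewise for $L'$. Therefore $\Psi$ induces a linear isomorphism
\[\varphi\colon V\cong L/Z(L)\isoto L'/Z(L')\cong V'.\]

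It remains to show that $\varphi_*(F)=F'$. This is the only step needing care, since $\Psi$ need not preserve the chosen complements $V\oplus 0$. The key point is that the bracket of two elements depends only on their classes modulo the center. So for $v_1,v_2\in V$,
\[\Psi\big(0,v_1\wedge v_2+F\big)=[\Psi(v_1,0),\Psi(v_2,0)]=\big(0,\varphi v_1\wedge\varphi v_2+F'\big).\]
Thus the restriction of $\Psi$ to the centers, viewed as a map $\chi\colon(\Lambda^2V)/F\to(\Lambda^2V')/F'$, satisfies
\[\chi\circ\pi=\pi'\circ\Lambda^2\varphi,\]
where $\pi$ and $\pi'$ are the quotient maps. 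This holds on decomposable bivectors, and these span $\Lambda^2V$, so it holds everywhere.

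Now take $f\in F$. Then $\pi'(\Lambda^2\varphi(f))=\chi(\pi(f))=0$, so $\varphi_*(F)\subseteq F'$. Since $\chi$ is injective, the same identity gives the reverse inclusion; alternatively, apply the argument to $\Psi^{-1}$. Hence $\varphi_*(F)=F'$.
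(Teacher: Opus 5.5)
Your proof is correct and is essentially the paper's argument. The paper states this corollary as an immediate consequence of the bijection in Proposition~\ref{p:bij} (using $(V_{L(V,F)},F_{L(V,F)})=(V,F)$ via Lemma~\ref{l:ZZ}), and your two directions make explicit the compatibility of Constructions~\ref{con:L-to-VF} and~\ref{con:VF-to-L} with isomorphisms, which the paper only asserts with ``clearly''.
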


\begin{corollary}\label{c:same-V}
Let $(V,F_1)$ and $(V,F_2)$ be two non-degenerate pairs with the same $V$.
Then  the non-degenerate two-step nilpotent Lie algebras 
$L(V,F_1)$ and $L(V,F_2)$ are isomorphic
if and only if there exists $g\in\GL(V)$
such that $g_*(F_1)=F_2$.
\end{corollary}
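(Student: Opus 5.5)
This is a direct consequence of the preceding corollary, applied with $V'=V$. I would nevertheless write out both directions explicitly, since the only content is tracking how an isomorphism of Lie algebras induces an isomorphism of pairs, and conversely.

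For the ``if'' direction, take $g\in\GL(V)$ with $g_*(F_1)=F_2$, where $g_*$ is the induced automorphism $\Lambda^2 g$ of $\Lambda^2 V$. Then $\Lambda^2 g$ descends to a linear isomorphism $\bar g\colon \Lambda^2V/F_1\isoto\Lambda^2V/F_2$. I would define
\[
\Phi=g\oplus\bar g\colon L(V,F_1)\to L(V,F_2).
\]
Using the bracket formula \eqref{e:Lie-bracket}, one checks that
\[
\Phi\big[(v_1,x_1),(v_2,x_2)\big]=\big(0,\,gv_1\wedge gv_2+F_2\big)=\big[\Phi(v_1,x_1),\Phi(v_2,x_2)\big],
\]
so $\Phi$ is a Lie algebra isomorphism.

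For the ``only if'' direction, let $\Phi\colon L_1=L(V,F_1)\isoto L_2=L(V,F_2)$ be an isomorphism. It maps the center onto the center. By Lemma \ref{l:ZZ}(i), both centers equal $0\oplus\Lambda^2V/F_j$, so $\Phi$ induces a linear isomorphism
\[
g\colon V=L_1/Z(L_1)\isoto L_2/Z(L_2)=V.
\]
Since brackets in $L_j$ depend only on the classes modulo the center, we get $\Phi(0,v_1\wedge v_2+F_1)=(0,gv_1\wedge gv_2+F_2)$. Hence the restriction of $\Phi$ to the centers is the map $\Lambda^2V/F_1\to\Lambda^2V/F_2$ induced by $\Lambda^2 g$, which is well defined because it comes from $\Phi$. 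Now if $f\in F_1$, then $f$ maps to $0$ in $\Lambda^2V/F_1$, so $(\Lambda^2 g)(f)$ maps to $0$ in $\Lambda^2V/F_2$, i.e. $g_*(F_1)\subseteq F_2$. Because $\Phi$ is injective on the centers, $\dim F_1=\dim F_2$, and therefore $g_*(F_1)=F_2$.

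The only delicate step is the well-definedness just used: the map $\Lambda^2V\to\Lambda^2V/F_2$, $\omega\mapsto(\Lambda^2g)(\omega)+F_2$, agrees on decomposable bivectors $v_1\wedge v_2$ with the composite
\[
\Lambda^2V\longrightarrow\Lambda^2V/F_1\xrightarrow{\ \Phi\ }\Lambda^2V/F_2 .
\]
Both maps are linear and decomposable bivectors span $\Lambda^2V$, so they agree everywhere, and this composite kills $F_1$. The non-degeneracy hypothesis enters only through Lemma \ref{l:ZZ}(i), which identifies the centers.
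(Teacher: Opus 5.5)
Your proof is correct and takes the paper's route: the paper obtains this corollary immediately from the preceding corollary with $V'=V$, i.e.\ from Proposition~\ref{p:bij}. Your explicit two-direction verification just spells out what the paper leaves as ``clear'' there, namely that Construction~\ref{con:L-to-VF} respects isomorphisms and, via Lemma~\ref{l:ZZ}(i), recovers $(V,F)$ from $L(V,F)$.
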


We consider the dual space $V^*$ to $V$. The natural pairing $V^*\times V\to k$ induces a pairing
\[ \Lambda^2 V^*\times \Lambda^2 V\to k,\]
which permits us to identify $\Lambda^2 V^* =(\Lambda^2 V)^*$.

\begin{definition}\label{d:*-pair}
By a $*$-pair we mean a pair  $(V, W)$ where $V$ is a vector space over $k$
and  $W\subseteq \Lambda^2 V^*$  is a $k$-subspace.
We say that a $*$-pair $(V,W)$ is {\em non-degenerate}
if for any $v\neq 0\in V$ there exists $v'\in V$ such that 
$\langle W, v\wedge v'\rangle \neq 0$.
\end{definition}

For a pair $(V,F)$ as above, consider the orthogonal complement
\[ W=F^\bot\subseteq\Lambda^2 V^*.\]
We obtain a $*$-pair $(V, W)$.
Conversely, from a $*$-pair $(V,W)$ we obtain a pair $(V,F)$ 
with $F=W^\bot\subset \Lambda^2 V$.

\begin{lemma}\label{l:ndp}
A pair $(V,F)$ as above is non-degenerate if and only if the corresponding $*$-pair $(V,W)$  with $W=F^\bot$ is non-degenerate.
\end{lemma}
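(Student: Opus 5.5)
The plan is to show that, for a fixed $v\in V$, the two conditions ``$v\wedge v'\notin F$'' and ``$\langle W, v\wedge v'\rangle\neq 0$'' are literally equivalent. Once this is known, the two non-degeneracy definitions (Definition \ref{d:non-deg-F} and Definition \ref{d:*-pair}) agree quantifier by quantifier.

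The only input is the standard fact from finite-dimensional linear algebra that taking orthogonal complements under the perfect pairing $\Lambda^2 V^*\times\Lambda^2 V\to k$ is an involution. Since $W=F^\bot$, we get $W^\bot=(F^\bot)^\bot=F$. In other words, an element $x\in\Lambda^2 V$ lies in $F$ if and only if $\langle w,x\rangle=0$ for all $w\in W$.

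Applying this with $x=v\wedge v'$ gives the chain
\[
v\wedge v'\notin F \iff v\wedge v'\notin W^\bot \iff \langle W, v\wedge v'\rangle\neq 0 .
\]
Now quantify: for every nonzero $v\in V$ there exists $v'\in V$ with the left-hand condition if and only if there exists such a $v'$ with the right-hand condition. This is exactly the asserted equivalence of the two non-degeneracy notions.

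There is no real obstacle here. The one point to check is the identity $(F^\bot)^\bot=F$, which holds because all spaces are finite-dimensional and the pairing $\Lambda^2 V^*\times\Lambda^2 V\to k$ is perfect: it identifies $\Lambda^2 V^*$ with $(\Lambda^2 V)^*$, as noted just before Definition \ref{d:*-pair}.
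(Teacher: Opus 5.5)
Your proposal is correct and follows the same approach as the paper, which simply observes that since $W=F^\bot$, the condition $v\wedge v'\notin F$ is equivalent to $\langle W,v\wedge v'\rangle\neq 0$. You additionally spell out the identity $(F^\bot)^\bot=F$, coming from finite-dimensionality and the perfect pairing $\Lambda^2 V^*\times\Lambda^2 V\to k$, which the paper leaves implicit in the word ``clearly''.
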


\begin{proof}
Indeed, since $W=F^\bot$, the condition $v\wedge v'\notin F$ 
of Definition \ref{d:non-deg-F} is clearly equivalent 
to the condition $\langle W,v\wedge v'\rangle\neq 0$ of Definition \ref{d:*-pair}.
\end{proof}

The group $\GL(V)$ naturally acts on $\Lambda^2 V$ and $\Lambda^2 V^*$.
Since both actions are the natural ones, the pairing is invariant, that is,
\[\langle g^*w, g_*\xi\rangle=\langle w,\xi\rangle 
\quad\ \text{for}\ \, w\in \Lambda^2 V^*,\ \xi\in \Lambda^2 V,\  g\in\GL(V),\]
whence
\[g^*(W_1)=W_2\, \Longleftrightarrow\, g_*(W_1^\bot)=W_2^\bot.\]

\begin{corollary}
Let $(V,F_1)$ and $(V,F_2)$ be two non-degenerate pairs with the same $V$,
and let $(V,W_1)$ and $(V,W_2)$ be the corresponding $*$-pairs.
Then the non-degenerate two-step nilpotent Lie algebras $L(V,F_1)$ and $L(V,F_2)$ are isomorphic
if and only if there exists $g\in\GL(V)$
such that $g^*(W_1)=W_2$.
\end{corollary}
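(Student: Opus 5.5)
The plan is to combine Corollary \ref{c:same-V} with the invariance of the pairing between $\Lambda^2 V^*$ and $\Lambda^2 V$ stated just above. First I would check that passing to orthogonal complements is compatible with the $*$-pair conditions. By Lemma \ref{l:ndp}, $(V,F_j)$ is non-degenerate if and only if $(V,W_j)$ is. Since $F_j^{\bot}=W_j$ and, in finite dimension, $W_j^{\bot}=F_j^{\bot\bot}=F_j$, the correspondence $F\leftrightarrow W=F^\bot$ is a bijection between subspaces of $\Lambda^2V$ and subspaces of $\Lambda^2V^*$.

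Next, Corollary \ref{c:same-V} says that $L(V,F_1)\simeq L(V,F_2)$ if and only if some $g\in\GL(V)$ satisfies $g_*(F_1)=F_2$. It therefore remains to translate the condition $g_*(F_1)=F_2$ into a condition on $W_1,W_2$. For this I use the displayed equivalence $g^*(W_1)=W_2\Leftrightarrow g_*(W_1^\bot)=W_2^\bot$. With $W_j^\bot=F_j$, this reads $g^*(W_1)=W_2\Leftrightarrow g_*(F_1)=F_2$.

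The one point to verify carefully is that equivalence itself, i.e.\ that $g^*$ here denotes the natural action of $g$ on $\Lambda^2V^*$ (the inverse transpose, $(g^*w)(x,y)=w(g^{-1}x,g^{-1}y)$), so that the pairing is really invariant. Given invariance, for $w\in W_1$ and $\xi\in F_1$ we get $\langle g^*w,g_*\xi\rangle=\langle w,\xi\rangle=0$, hence $g_*(F_1^{\phantom{\bot}})^{\bot}\supseteq g^*(W_1)$. Comparing dimensions gives $(g_*F_1)^\bot=g^*(W_1)$. So $g_*F_1=F_2$ if and only if $g^*W_1=(g_*F_1)^\bot=F_2^\bot=W_2$, using again that $\bot$ is injective on subspaces.

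Combining the two steps gives the corollary: the same $g$ works in both directions. The argument is purely formal. The only potential obstacle is the convention for $g^*$: if one instead used the plain pullback, one would replace $g$ by $g^{-1}$, which does not affect the existence statement.
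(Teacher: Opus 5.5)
Your proposal is correct and matches the paper's argument. The paper deduces the corollary from Corollary~\ref{c:same-V} together with the invariance $\langle g^*w,g_*\xi\rangle=\langle w,\xi\rangle$, which gives $g^*(W_1)=W_2\Leftrightarrow g_*(W_1^\bot)=W_2^\bot$ with $W_j^\bot=F_j$; your dimension count showing $(g_*F_1)^\bot=g^*(W_1)$ just makes explicit the step the paper leaves implicit.
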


\begin{proposition}\label{p:nd}
Let $L$ be a finite-dimensional two-step nilpotent Lie algebra, possibly degenerate, over a field $k$ of characteristic 0. 
Write $Z=Z(L)$ for the center of $L$. Then
\begin{enumerate}
\item[\rm(i)]There exists a decomposition $L\simeq L_\nd\times k^m$, where $L_\nd$ is a  
{\em non-degenerate} two-step nilpotent Lie algebra, 
$m=\dim Z-\dim [L,L]$, 
and $k^m$ is regarded as an abelian Lie algebra of dimension $m\ge 0$.
\item[\rm(ii)] The isomorphism class of $L_\nd$ does not depend on the choice of decomposition:
if $L\simeq L'_\nd\times k^{m'}$ with  $L'_\nd$ non-degenerate two-step nilpotent and $k^{m'}$ abelian,
then $L'_\nd\simeq L_\nd$ and $m'=m$.
\end{enumerate}
\end{proposition}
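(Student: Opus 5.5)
For (i), I will choose a linear complement $C$ to $[L,L]$ inside $Z$, so that $Z=[L,L]\oplus C$ and $\dim C=m=\dim Z-\dim[L,L]$. Next I will choose a complement $V'$ to $Z$ in $L$, so that $L=V'\oplus Z$, and set $L_\nd=V'\oplus[L,L]$. Since $[L,L]\subseteq Z$ and all brackets land in $[L,L]$, the subspace $L_\nd$ is a subalgebra (indeed an ideal). The subspace $C$ is central, hence an abelian ideal, and $[L_\nd,C]=0$. As vector spaces $L=L_\nd\oplus C$, so $L\simeq L_\nd\times k^m$ as Lie algebras.

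It remains to check that $L_\nd$ is non-degenerate. First, $[L_\nd,L_\nd]=[L,L]$, because $[L,L]$ is spanned by brackets of elements of $V'$ and $Z$ brackets trivially. For the center, write $x=v+z\in L_\nd$ with $v\in V'$ and $z\in[L,L]$, and suppose $x\in Z(L_\nd)$. Then $[x,L_\nd]=0$, and since $[x,C]=0$ as well, we get $[x,L]=0$. Hence $x\in Z$, so $v\in V'\cap Z=0$. Therefore $Z(L_\nd)=[L,L]=[L_\nd,L_\nd]$.

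For (ii), let $L\simeq L'_\nd\times k^{m'}$ with $L'_\nd$ non-degenerate. The key observations are that $Z(L)=Z(L'_\nd)\oplus k^{m'}$ and $[L,L]=[L'_\nd,L'_\nd]$. Since $Z(L'_\nd)=[L'_\nd,L'_\nd]$, this gives $m'=\dim Z-\dim[L,L]=m$. To identify $L'_\nd$, I will use the quotient $L/C'$ with $C'=k^{m'}$, which is isomorphic to $L'_\nd$. The cleanest intrinsic description of this quotient is through the pair of Construction \ref{con:L-to-VF}: $L'_\nd$ is non-degenerate, so it is determined up to isomorphism, by Proposition \ref{p:bij}, by its pair $(V_{L'_\nd},F_{L'_\nd})$. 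Moreover $V_{L'_\nd}=L'_\nd/Z(L'_\nd)\cong L/Z(L)=V_L$, and this isomorphism is canonical and compatible with brackets. The bracket map $\Lambda^2 V_L\to[L,L]$ is intrinsic to $L$, and its kernel is exactly $F_{L'_\nd}$, since $[L'_\nd,L'_\nd]=[L,L]$ and the bracket maps agree under the identification.

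Hence both $L_\nd$ and $L'_\nd$ have pairs isomorphic to $(L/Z,\ \ker(\Lambda^2(L/Z)\to[L,L]))$, and Proposition \ref{p:bij} gives $L_\nd\simeq L'_\nd$. The main point requiring care is verifying that the identification $V_{L'_\nd}\cong V_L$ respects the bracket maps; this is routine once an isomorphism $L\simeq L'_\nd\times k^{m'}$ is fixed.
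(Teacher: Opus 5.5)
Your proof is correct. Part (i) is exactly the paper's argument: a complement $V'$ of $Z$, $L_\nd=V'\oplus[L,L]$, a complement $C$ of $[L,L]$ in $Z$, and the same computation of $Z(L_\nd)$.

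Part (ii) is packaged differently. The paper regards $L_\nd$ and $L'_\nd$ as subalgebras of $L$ and writes
\[
L_\nd=\tilde V_\nd\oplus[L,L],\qquad L'_\nd=\tilde V'_\nd\oplus[L,L],
\]
where $\tilde V_\nd$ and $\tilde V'_\nd$ are both complements of $Z(L)$. It then checks directly that $\sigma\oplus\mathrm{Id}_{[L,L]}$ is a Lie algebra isomorphism, with $\sigma\colon\tilde V_\nd\isoto\tilde V'_\nd$ satisfying $\sigma(v)\equiv v\pmod{Z(L)}$. The equality $m=m'$ then follows from a dimension count.

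You obtain $m'=m$ instead from $Z(L)=Z(L'_\nd)\oplus k^{m'}$ and $[L,L]=[L'_\nd,L'_\nd]$. You get $L_\nd\simeq L'_\nd$ by showing that both have pairs isomorphic to the intrinsic pair $\bigl(L/Z,\ \ker(\Lambda^2(L/Z)\to[L,L])\bigr)$ and then invoking Proposition \ref{p:bij}. There is no circularity, since Proposition \ref{p:bij} is proved earlier and independently.

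The trade-off is this. The paper's route is self-contained and never uses the pair classification. Yours makes explicit that $L_\nd$ is determined by a datum depending on $L$ alone. Underneath, the mechanism is the same, because the proof of Proposition \ref{p:bij} itself chooses a splitting of $L\to L/Z$, just as the paper does here.

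The step you call routine does go through. The map $L'_\nd/Z(L'_\nd)\to L/Z(L)$ induced by inclusion is bijective because $L'_\nd\cap Z(L)=Z(L'_\nd)$ and $L'_\nd+Z(L)=L$. It intertwines the bracket maps into $[L,L]=Z(L'_\nd)$ because brackets in $L'_\nd$ are brackets in $L$.
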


\begin{proof}
We prove (i).
Let $\tilde V$ be a complement of $Z$ in $L$, that is,
\begin{equation*}
L=\tilde V\oplus Z.
\end{equation*}
We have $[L, L]\subseteq Z$, because the Lie algebra $L$ is two-step nilpotent.
Set
\[ L_\nd=\tilde V\oplus [L,L]\,\subseteq\, \tilde V\oplus Z = L.\]
Since 
$ [\tilde V,\tilde V]= [L, L]\subseteq Z$, 
we see that  $L_\nd$ is a Lie subalgebra of $L$,
which is clearly two-step nilpotent.

We show that $L_\nd$ is non-degenerate.
We have $[L_\nd,L_\nd]=[\tilde V, \tilde V]=[L,L]\subseteq Z(L_\nd)$.
Conversely, let $v+d\in Z(L_\nd)$ with $v\in\tilde V,\ d\in [L,L]\subseteq Z$.
Then $[(v+d),\tilde V]=0$, whence $[v,\tilde V]=0$.
On the other hand, clearly $[v,Z]=0$.
Since $L=\tilde V\oplus Z$, we obtain that $v\in Z$ and that $v\in \tilde V\cap Z=0$,
whence $v+d=d\in [L,L]$, and therefore $Z(L_\nd)\subseteq [L,L]=[L_\nd,L_\nd]$.
Thus $Z(L_\nd)=[L_\nd,L_\nd]$ and so $L_\nd$ is indeed non-degenerate.

Let $C$ be a complement of $[L,L]$ in $Z$, that is, 
\[ Z=[L,L]\oplus C.\]
We regard $C\simeq k^m$ as an abelian Lie algebra of dimension $m$.
Then 
\[L=L_\nd\times C \simeq L_\nd\times k^m,\]
which proves (i).

We prove (ii).
Let $\tilde V_\nd$ be a complement of  $Z(L_\nd)$ in $L_\nd$;
then it is also a complement of $Z(L)$ in $L$, and we have 
\[L_\nd=\tilde V_\nd\oplus Z(L_\nd)= \tilde V_\nd\oplus [L_\nd,L_\nd]=\tilde V_\nd\oplus [L,L].\]
Let $L\simeq L'_\nd\times k^{m'}$ be another such decomposition
and let $\tilde V'_\nd$ be a complement of  $Z(L'_\nd)$ in $L'_\nd$;
then it is also a complement of $Z(L)$ in $L$,
and we have
\[L'_\nd= \tilde V'_\nd\oplus Z(L'_\nd)=\tilde V'_\nd\oplus [L'_\nd,L'_\nd]=\tilde V'_\nd\oplus [L,L].\]
The isomorphism $\sigma\colon\tilde V_\nd\isoto L/Z(L)\isoto \tilde V'_\nd$
induces an isomorphism of vector spaces  
\[\tilde\sigma=\sigma\oplus {\rm Id}_{[L,L]}\colon L_\nd\isoto L'_\nd\hs,\]
whence $m=m'$.
We show that the isomorphism of vector spaces $\tilde\sigma$ respects brackets.
Indeed, $\sigma(v)\equiv v \pmod{Z(L)}$ for $v\in \tilde V_\nd$\hs, 
whence $[\sigma(v_1),\sigma(v_2)]=[v_1,v_2]$ for all $v_1,v_2\in\tilde V_\nd$\hs.
Thus $\tilde\sigma$ is an isomorphism of Lie algebras, which proves (ii).
\end{proof}

\begin{definition}
Let $L$ be a Lie algebra over a field $K$ of characteristic 0, and let $k\subset K$ be a subfield.
A {\em $k$-form} of $L$ is a Lie algebra $L_k$ over $k$ such that $L_k\otimes_k K\simeq L$.
We say that $L$ {\em admits a $k$-form} if there exists a $k$-form of $L$.
\end{definition}

\begin{proposition}\label{p:dg-R-C}
Let $K/k$ be an arbitrary extension of fields of characteristic $0$, and let 
$L$ be a non-degenerate two-step nilpotent Lie algebra over $K$.
Consider the degenerate two-step nilpotent Lie algebra $L'=L\times K^m$ over $K$
where $m>0$. 
Then $L'$ admits a $k$-form if and only if $L$ admits a $k$-form.
\end{proposition}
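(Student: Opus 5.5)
The easy direction is when $L$ already has a $k$-form. Suppose $L\simeq L_k\otimes_k K$ for some Lie algebra $L_k$ over $k$. Then $L_k\times k^m$ is a Lie algebra over $k$, and
\[(L_k\times k^m)\otimes_k K\simeq (L_k\otimes_k K)\times K^m\simeq L\times K^m=L'.\]
So $L'$ admits a $k$-form.

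For the converse, I will use Proposition \ref{p:nd} over both fields $k$ and $K$. Let $L'_k$ be a $k$-form of $L'$. It is two-step nilpotent, since the identity \eqref{e:2-step} holds after base change to $K$ and $L'_k\subset L'_k\otimes_k K$. By Proposition \ref{p:nd}(i) over $k$, we can write
\[L'_k\simeq M_k\times k^{m_0},\]
where $M_k$ is a non-degenerate two-step nilpotent Lie algebra over $k$. Tensoring with $K$ gives
\[L'\simeq (M_k\otimes_k K)\times K^{m_0}.\]

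The key point is that $M_k\otimes_k K$ is still non-degenerate, i.e. that the center and the derived algebra commute with extension of scalars. For the derived algebra this is clear, since $[M,M]$ is spanned by the brackets of basis elements. For the center, $Z(M_k)$ is the kernel of the $k$-linear map $M_k\to \mathrm{Hom}_k(M_k,M_k)$, $x\mapsto \operatorname{ad}x$. Kernels of linear maps commute with the flat base change $-\otimes_k K$, so
\[Z(M_k\otimes_k K)=Z(M_k)\otimes_k K.\]
Hence
\[Z(M_k\otimes_k K)=Z(M_k)\otimes_k K=[M_k,M_k]\otimes_k K=[M_k\otimes_k K,\,M_k\otimes_k K],\]
which is the non-degeneracy of $M_k\otimes_k K$.

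We now have two decompositions of $L'$ into a non-degenerate two-step nilpotent part times an abelian factor:
\[L'=L\times K^m\simeq (M_k\otimes_k K)\times K^{m_0}.\]
The first uses the hypothesis that $L$ is non-degenerate. Proposition \ref{p:nd}(ii) over $K$ then gives
\[L\simeq M_k\otimes_k K \quad\text{and}\quad m_0=m.\]
Thus $M_k$ is a $k$-form of $L$.

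The only step that needs care is the base-change compatibility of the center, and it is routine linear algebra. The rest is a direct application of Proposition \ref{p:nd}.
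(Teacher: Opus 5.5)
Your proposal is correct and follows the paper's proof essentially step for step: the easy direction via $L_k\times k^m$, then Proposition~\ref{p:nd}(i) over $k$, base change to $K$ with the check that the center and derived algebra commute with $-\otimes_k K$, and finally Proposition~\ref{p:nd}(ii) over $K$. You also justify two points the paper leaves implicit: that a $k$-form of $L'$ is itself two-step nilpotent, and why $Z(M_k\otimes_k K)=Z(M_k)\otimes_k K$ holds (as the kernel of the adjoint map).
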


\begin{proof}
Suppose that $L$ admits a $k$-form $L_k$. 
Then clearly $L_k\times k^m$ is a $k$-form of $L\times K^m=L'$.

Conversely, suppose that $L'$ admits a $k$-form $L'_k$.
By Proposition \ref{p:nd}(i) there exists a decomposition
\begin{equation}\label{e:nd-R}
L'_k\simeq L'_{k,\nd}\times k^{m'}
\end{equation}
where $L'_{k,\nd}$ is a {\em non-degenerate} two-step nilpotent  Lie algebra over $k$.
From \eqref{e:nd-R} we obtain a decomposition 
\begin{equation*}
L'\simeq (L'_{k,\nd}\otimes_k  K) \times K^{m'},
\end{equation*}
where the two-step nilpotent  Lie algebra $L'_{k,\nd}\otimes_k K$ over $K$ is non-degenerate
because 
\[ Z(L'_{k,\nd}\otimes_k\hm K)=Z(L'_{k,\nd})\otimes_k\hm K=
\big[L'_{k,\nd},L'_{k,\nd}\big] \otimes_k\hm K=
\big[L'_{k,\nd}\otimes_k\hm K\hs,\hs L'_{k,\nd}\otimes_k\hm K\big].\]
Since also $L'=L\times K^m$
where $L$ is a non-degenerate two-step nilpotent Lie algebra over $K$,
Proposition \ref{p:nd}(ii) implies that $L'_{k,\nd}\otimes_k K\simeq L$.
Thus $L'_{k,\nd}$ is a  $k$-form of $L$, as desired.
\end{proof}

\section{Two-step nilpotent Lie algebras over $\C$}
\label{s:2-C}

Now we take $k=\C$. Write ${\rm Gal}(\C/\R)=\{1,\gamma\}$ 
where $\gamma$ denotes the complex conjugation.
When $a\in \C$ is a complex number, we denote by $\upgam a$ 
the complex conjugate number.
For a Lie algebra $L$ over $\C$, let $\dgam L$
denote the Lie algebra over $\C$ obtained by transporting structure via $\gamma$.
This means that $\dgam L$ is the same set $L$
with the same abelian group structure and the same Lie bracket,
but with the new multiplication by scalars
\[ a*x\coloneqq\upgam a\cdot x\quad\ \text{for all}\ \,a\in \C,\ x\in L.\]
For a vector space $V$ over $\C$, we define $\dgam V$ similarly.

Let $(V,F)$ be a non-degenerate pair as in Construction \ref{con:VF-to-L}.
Then
\begin{equation}\label{e:dgam}
\dgam L(V,F)\cong L(\hs\dgam V, \dgam F\hs).
\end{equation}

Let $V'=V'_\R\otimes_\R\C$ for some real vector space $V'_\R$; then we identify $\dgam V'=V'$,
and consider the natural action $x\mapsto \upgam x$ of   $\gamma$ on $V'$.
For a $\C$-subspace $W'\subset V'$, write $\upgam\hs W'=\{\upgam x \ |\ x\in W'\}$.
Then
\begin{equation}\label{e:upgam}
(\dgam V',\dgam W')\cong(V',\upgam\hs W');
\end{equation}
we apply this to $V'=V$, $V'=\Lambda^2 V$, and $V'=\Lambda^2 V^*$.

\begin{lemma}\label{l:gamma-L}
Let $V=\C^n$ and let $(V,F)$ be a non-degenerate pair as in Construction \ref{con:VF-to-L}.
We have $\dgam L(V,F)\simeq L(V,F)$ if and only if
there exists $g\in\GL(V)$ such that $g_*(\upgam F)=F$.
\end{lemma}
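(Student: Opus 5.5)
Combine \eqref{e:dgam}, \eqref{e:upgam} and Corollary \ref{c:same-V}.

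First, by \eqref{e:dgam} we have $\dgam L(V,F)\cong L(\dgam V,\dgam F)$. Since $V=\C^n=\R^n\otimes_\R\C$, we may apply \eqref{e:upgam} with $V'=\Lambda^2 V=\Lambda^2\R^n\otimes_\R\C$ and $W'=F$. The $\C$-linear identification $\dgam V\isoto V$, $x\mapsto \upgam x$, induces the identification $\Lambda^2(\dgam V)=\dgam(\Lambda^2 V)\isoto\Lambda^2 V$. Under it, $\dgam F$ goes to $\upgam F$. Hence
\[\dgam L(V,F)\cong L(V,\upgam F).\]

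We should also check that $(V,\upgam F)$ is again non-degenerate, so that Corollary \ref{c:same-V} applies. This is immediate. If $v\ne0$, choose $v'$ with $\upgam v\wedge v'\notin F$. Then $v\wedge \upgam v'=\upgam(\upgam v\wedge v')\notin\upgam F$. Alternatively, non-degeneracy is transported by the semilinear bijection.

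Now Corollary \ref{c:same-V} applies to the two non-degenerate pairs $(V,\upgam F)$ and $(V,F)$ with the same $V$. It gives $L(V,\upgam F)\simeq L(V,F)$ if and only if there exists $g\in\GL(V)$ with $g_*(\upgam F)=F$. Combining this with the displayed isomorphism proves the lemma.

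The only step that needs care is the compatibility claim: the canonical identification of $\dgam(\Lambda^2 V)$ with $\Lambda^2(\dgam V)$ must match the coordinatewise conjugation on $\Lambda^2\C^n$. To see this, check it on the basis $e_p\wedge e_q$, which has real coordinates. Conjugation acts coordinatewise there, so $\dgam F$ corresponds exactly to $\upgam F=\{\upgam\xi\mid \xi\in F\}$.
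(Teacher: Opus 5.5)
Your proposal is correct and follows the paper's proof: the paper likewise chains \eqref{e:dgam} and \eqref{e:upgam} to get $\dgam L(V,F)\cong L(V,\upgam F)$, then applies Corollary \ref{c:same-V}. Your two extra checks are details the paper leaves implicit: that $(V,\upgam F)$ is again non-degenerate, which Corollary \ref{c:same-V} requires, and that $\dgam F$ corresponds to the coordinatewise conjugate $\upgam F$.
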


\begin{proof}
By \eqref{e:dgam} and \eqref{e:upgam} we have
\[ \dgam L(V,F)\cong L(\dgam V, \dgam F)\cong L(V, \upgam F).\]
By Corollary \ref{c:same-V} we have $L(V, \upgam F)\simeq L(V,F)$
if and only if there exists $g\in\GL(V)$
such that $g_*(\upgam F)=F$, as desired.
\end{proof}

\begin{corollary}
Let $V=\C^n$ and let $(V,W)$ be a non-degenerate $*$-pair. 
Write $L(V,W)$ for $L(V,F)$ where $F=W^\bot\subset\Lambda^2 V$.
We have $\dgam L(V,W)\simeq L(V,W)$ if and only if
there exists $g\in\GL(V)$ such that $g^*(\upgam\hs W)=W$.
\end{corollary}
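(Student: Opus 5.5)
The plan is to deduce the Corollary directly from Lemma \ref{l:gamma-L} by passing to orthogonal complements. First we identify the objects. Set $F=W^\bot\subset\Lambda^2 V$. By Lemma \ref{l:ndp}, $(V,F)$ is a non-degenerate pair, so $L(V,W)=L(V,F)$ is defined. Lemma \ref{l:gamma-L} then says that $\dgam L(V,W)\simeq L(V,W)$ if and only if there exists $g\in\GL(V)$ with $g_*(\upgam F)=F$. It remains to translate this condition on $F$ into the stated condition on $W$.

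The key step is to check that $(\upgam W)^\bot=\upgam(W^\bot)=\upgam F$. We use the real structures on $\Lambda^2 V$ and $\Lambda^2 V^*$ coming from $V=\R^n\otimes_\R\C$. With respect to these, the natural pairing $\Lambda^2 V^*\times\Lambda^2 V\to\C$ is defined over $\R$: in the standard bases $e_p^*\wedge e_q^*$ and $e_p\wedge e_q$ it has real (indeed $0,\pm1$) coefficients. Hence
\[\langle \upgam w,\upgam\xi\rangle=\upgam\langle w,\xi\rangle.\]
It follows that $\xi\in W^\bot$ if and only if $\upgam\xi\in(\upgam W)^\bot$, which is the required identity.

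Next we apply the invariance relation stated before the previous corollary: $g^*(W_1)=W_2$ if and only if $g_*(W_1^\bot)=W_2^\bot$. Taking $W_1=\upgam W$ and $W_2=W$, we get that $g^*(\upgam W)=W$ holds exactly when $g_*(\upgam F)=F$. Substituting this into Lemma \ref{l:gamma-L} gives the claim.

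There is one point of care. The equivalence is only meaningful if $g^*$ is understood as the natural action of $g$ on $\Lambda^2V^*$ that makes the pairing invariant, namely via $(g^{-1})^{\TT}$. Since $g$ ranges over all of $\GL(V)$, the choice between $g$ and $g^{-1}$ does not affect the existence statement. I expect no real obstacle here; the only thing to verify carefully is that complex conjugation is compatible with the pairing, which is immediate in the standard bases.
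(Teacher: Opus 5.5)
Your proposal is correct and follows the route the paper leaves implicit: the paper states this corollary without proof, as an immediate consequence of Lemma \ref{l:gamma-L}, the invariance relation $g^*(W_1)=W_2\Leftrightarrow g_*(W_1^\bot)=W_2^\bot$, and the compatibility $\upgam(W^\bot)=(\upgam W)^\bot$ underlying \eqref{e:upgam}. The paper does not spell out two details, and you supply both: the explicit check that the pairing is defined over $\R$, and the remark that the convention chosen for $g^*$ does not affect the existence statement.
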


\begin{lemma}\label{l:real-form}
Let $V_\R=\R^n$, $V=\C^n$, and let $(V,F)$ be a non-degenerate pair as in Construction \ref{con:VF-to-L}.
The Lie algebra $L(V,F)$ admits a real form
if and only if there exists $g\in\GL(V)$
such that
\begin{equation}\label{e:exist-real}
\upgam(g_*(F))= g_*(F).
\end{equation}
\end{lemma}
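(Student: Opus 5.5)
We prove both implications by passing between real forms of $L(V,F)$ and real forms of the pair $(V,F)$, using the bijection of Proposition \ref{p:bij} (which holds over any field of characteristic 0, in particular over $\R$) together with the compatibility of Constructions \ref{con:L-to-VF} and \ref{con:VF-to-L} with extension of scalars.

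\emph{Sufficiency.} Suppose $g\in\GL(V)$ satisfies \eqref{e:exist-real}, and set $F'=g_*(F)$. By Corollary \ref{c:same-V}, $L(V,F)\simeq L(V,F')$. Since $\upgam F'=F'$, Galois descent for subspaces (a $\gamma$-stable $\C$-subspace of $U_\R\otimes_\R\C$ is spanned by its real points) gives $F'=F'_\R\otimes_\R\C$ with $F'_\R=(F')^\gamma\subset\Lambda^2 V_\R$. The pair $(V_\R,F'_\R)$ is non-degenerate: if $0\ne v\in V_\R$ had $v\wedge v'\in F'_\R$ for all $v'\in V_\R$, then by $\C$-linearity $v\wedge v'\in F'$ for all $v'\in V$, contradicting the non-degeneracy of $(V,F')$, which it inherits from $(V,F)$ via $g$. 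We may therefore form $L(V_\R,F'_\R)$, and
\[
L(V_\R,F'_\R)\otimes_\R\C=V\oplus(\Lambda^2V)/F'=L(V,F'),
\]
with the bracket formula \eqref{e:Lie-bracket} extending $\C$-linearly. Hence $L(V,F)$ has a real form.

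\emph{Necessity.} Suppose $L_\R$ is a real form of $L=L(V,F)$. First, $L_\R$ is two-step nilpotent, since $[[L_\R,L_\R],L_\R]\otimes\C=[[L,L],L]=0$. Second, it is non-degenerate, because center and derived algebra commute with base change:
\[
Z(L_\R)\otimes\C=Z(L)=[L,L]=[L_\R,L_\R]\otimes\C .
\]
The equality for the center holds because it is the kernel of the linear map $x\mapsto\mathrm{ad}\,x$. Construction \ref{con:L-to-VF} then yields a non-degenerate real pair $(V'_\R,F'_\R)$ with $V'_\R=L_\R/Z(L_\R)$, and base change gives
\[
(V'_\R\otimes\C,\;F'_\R\otimes\C)=(V_L,F_L)\cong(V,F).
\]
The second isomorphism follows from Proposition \ref{p:bij}, whose proof shows that $(V_{L(V,F)},F_{L(V,F)})=(V,F)$.

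Now choose an $\R$-isomorphism $V'_\R\isoto\R^n=V_\R$. Its complexification composed with the isomorphism above is an element $g\in\GL(V)$ with $g_*(F)=F'_\R\otimes\C$, which is a $\gamma$-stable subspace of $\Lambda^2V$. This gives exactly \eqref{e:exist-real}.

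The main point requiring care is the base-change compatibility in the necessity direction, i.e.\ that taking the center, the derived algebra and the kernel $F_L=\ker\psi_L$ all commute with $\otimes_\R\C$. Each of these is defined by a linear condition, so the compatibility follows from exactness of $\otimes_\R\C$. The second point of care is the descent of $\gamma$-stable subspaces in the sufficiency direction, which is the standard fact recorded there.
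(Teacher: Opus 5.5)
Your proof is correct and follows essentially the same route as the paper. For sufficiency you descend $F'=g_*(F)$ to $F'_\R=(F')^\gamma$ and take $L(V_\R,F'_\R)$. For necessity you apply Construction \ref{con:L-to-VF} to the real form and transport the resulting real pair to $V_\R=\R^n$ by a chosen $\R$-isomorphism $\varphi$. There are two small differences. You read off $g$ directly from the base-changed isomorphism of pairs, where the paper invokes Corollary \ref{c:same-V} for $L(V,F'')\simeq L(V,F)$. You also explicitly check the non-degeneracy of $L_\R$ and of $(V_\R,F'_\R)$, which the paper leaves implicit. One notational slip: the $\gamma$-stable subspace you obtain is $g_*(F)=\varphi_*(F'_\R)\otimes_\R\C$ with $\varphi_*(F'_\R)\subset\Lambda^2 V_\R$, not literally $F'_\R\otimes_\R\C$, though this is clearly what you intend.
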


\begin{proof}
Assume that there exists $g\in\GL(V)$ satisfying \eqref{e:exist-real}.
Set $F'=g_*(F)$; then $\upgam F'=F'$.
Set $F'_\R=\{x\in F'\ |\ \upgam x=x\}$.
Then $F'_\R\otimes_\R\C=F'$  and $(V_\R,F'_\R)\otimes_\R\C=(V,F')$,
whence $L(V_\R, F'_\R)\otimes_\R\C=L(V,F')$ and  $L(V_\R, F'_\R)$ is a real form of $L(V,F')$.
By Corollary \ref{c:same-V} we have
\[ L(V,F)\cong L(V, g_*(F))=L(V,F').\]
Thus $L(V_\R, F'_\R)$ is also a real form of $L(V,F)$.

Conversely, assume that there exists a real form $L'_\R$ of $L(V,F)$;
then we have an isomorphism $L'_\R\otimes_\R\C\isoto L(V,F)$.
Set  $V'_\R=V_{L'_\R}$, $F'_\R=F_{L'_\R}$ as in Construction \ref{con:L-to-VF},
and write $V'=V'_\R\otimes _\R\C$, $F'=F'_\R\otimes _\R\C$.
Then we have isomorphisms
\[ L(V'_\R,F'_\R)\isoto L'_\R\quad\ \text{and}\quad\ L(V', F')\isoto L'_\R\otimes_\R\C\isoto L(V,F).\]
Choose an isomorphism $\varphi\colon V'_\R\isoto V_\R$
(existing because $V'\simeq V$, whence $\dim V'_\R=\dim V_\R$),
and set $F''_\R=\varphi_*(F'_\R)$;
then $F''_\R\subset \Lambda^2 V_\R$.
Set $F''=F''_\R\otimes_\R\C\subset (\Lambda^2 V_\R)\otimes_\R\C=\Lambda^2 V$; then $\upgam F''=F''$.
It follows from our constructions that
\[ L(V,F'')\simeq L(V',F')\simeq L(V,F),\]
and by Corollary \ref{c:same-V} there exists $g\in\GL(V)$ such that
$F''=g_*(F)$.
Thus
\[\upgam\hs(g_*(F))=\upgam F''=F''=g_*(F),\]
as desired.
\end{proof}

\begin{corollary}
Let $V_\R=\R^n$, $V=\C^n$, and let $(V,W)$ be a non-degenerate $*$-pair.
The Lie algebra $L(V,W)$ admits a real form
if and only if there exists $g\in\GL(V)$
such that
\begin{equation*}
\upgam\hs(g^*(W))= g^*(W).
\end{equation*}
\end{corollary}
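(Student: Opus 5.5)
The statement is the dual form of Lemma~\ref{l:real-form}. I would deduce it directly from that lemma, using the correspondence $F\leftrightarrow W=F^\bot$ and its compatibility with $\GL(V)$ and with complex conjugation.

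First, set $F=W^\bot\subset\Lambda^2 V$. By Lemma~\ref{l:ndp}, the pair $(V,F)$ is non-degenerate, and by definition $L(V,W)=L(V,F)$. Lemma~\ref{l:real-form} then says that $L(V,W)$ admits a real form if and only if there is $h\in\GL(V)$ with $\upgam(h_*(F))=h_*(F)$.

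Second, I would translate both sides of this condition into statements about $W$.

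For the action of $\GL(V)$: the pairing is invariant, so $\langle h^*w,h_*\xi\rangle=\langle w,\xi\rangle$. Here $h^*$ denotes the natural (contragredient) action on $\Lambda^2V^*$, as in the paper. It follows that $(h_*F)^\bot=h^*(F^\bot)=h^*(W)$.

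For conjugation: identify $V=\R^n\otimes\C$, so $\Lambda^2V$ and $\Lambda^2V^*$ carry the induced real structures. The pairing of two real tensors is real, hence
\[\langle \upgam w,\upgam\xi\rangle=\upgam\langle w,\xi\rangle.\]
This gives $(\upgam F')^\bot=\upgam(F'^\bot)$ for any subspace $F'\subset\Lambda^2V$.

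Since $F'\mapsto F'^\bot$ is a bijection between subspaces of $\Lambda^2V$ and of $\Lambda^2V^*$, the condition $\upgam(h_*F)=h_*F$ is equivalent to $\upgam(h^*W)=h^*W$. Taking $g=h$, where $h$ ranges over all of $\GL(V)$, gives the corollary.

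There is no real obstacle. The only point needing care is to fix the convention for $g^*$, namely the natural action on $\Lambda^2V^*$ compatible with the invariant pairing. Once that convention is fixed, the equivalence holds for the same element $g$.
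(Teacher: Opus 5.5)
Your proposal is correct and is essentially the argument the paper has in mind. The paper states this corollary without separate proof, as the translation of Lemma~\ref{l:real-form} via $W=F^\bot$: it uses the invariance $\langle g^*w,g_*\xi\rangle=\langle w,\xi\rangle$ (so $(g_*F)^\bot=g^*(F^\bot)$) together with the compatibility $(\upgam F')^\bot=\upgam(F'^\bot)$ of orthogonal complements with complex conjugation, and you verify this last step correctly even though the paper leaves it implicit.
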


\section{A complex Lie algebra admitting no real form}
\label{s:F}

Let $V=\C^6$ with standard basis $e_1,\dots,e_6$, and let $i\in\C$ denote a fixed
square root of $-1$.
Then $\Lambda^2 V\simeq \C^{15}$.
Write
\[j=\SmallMatrix{0&1\\-1&0},\quad J={\rm diag}(j,j,j)\in\GL(6,\C).\]
Then $J^2=-\Id_6\in\GL(6,\C)$ and $J_*^2=\Id_{15}\in\GL\big(\Lambda^2 V\big)$.

\begin{proposition}\label{p:real-orbit}
For $V=\C^6$, let $F\subset \Lambda^2 V$ be a subspace of codimension 4  with the following properties:
\begin{enumerate}
\item[\rm (a)] $\Stab_{\GL(V)}(F)=\{\lambda\cdot\Id_V,\ \lambda\in \C^\times\}$;
\item[\rm (b)] $J_*(\hs\upgam F)=  F$ where $\gamma\in{\rm Gal}(\C/\R)$ denotes the complex conjugation.
\end{enumerate}
Then the orbit of $F$ under $G=\GL(V)$ in the Grassmannian ${\rm Gr}(11,\Lambda^2 V)$
of 11-dimensional subspaces in $\Lambda^2 V$
is stable under $\gamma$, but has no real elements,
that is, there is no $g\in\GL(V)$ such that
\begin{equation}\label{e:real}
\upgam\hs\big (g_*(F)\big)=g_*(F).
\end{equation}
\end{proposition}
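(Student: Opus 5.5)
Stability of the orbit under $\gamma$ is immediate from (b). For $g\in\GL(V)$ we have $\upgam(g_*F)=(\upgam g)_*(\upgam F)$, where $\upgam g$ is the entrywise conjugate matrix. Since $J_*(\upgam F)=F$ and $J^{-1}=-J$ acts on $\Lambda^2V$ the same way as $J$ (because $(-1)^2=1$), we get $\upgam F=J_*^{-1}F=J_*F$. Hence $\upgam(g_*F)=(\upgam g\, J)_*F$, which lies in the $G$-orbit of $F$.

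For the absence of real points, I argue by contradiction. Suppose $g\in\GL(V)$ satisfies \eqref{e:real}. By the computation above,
\[(\upgam g\,J)_*F=\upgam(g_*F)=g_*F,\]
so $g^{-1}\,\upgam g\,J\in\Stab_{\GL(V)}(F)$. By (a) there is $\lambda\in\C^\times$ with $\upgam g\,J=\lambda g$, i.e.
\[\upgam g=\lambda\, g\,J^{-1}=-\lambda\, gJ .\]
Applying $\gamma$ and using that $J$ is real, $\upgam{(\upgam g)}=g$ gives
\[g=-\upgam\lambda\,\upgam g\,J=-\upgam\lambda\,(-\lambda gJ)J=\upgam\lambda\lambda\, gJ^2=-|\lambda|^2 g .\]
Since $g$ is invertible, $|\lambda|^2=-1$, which is impossible. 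This contradiction proves the claim.

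The only delicate point is keeping the convention for $\gamma$ acting on $\GL(V)$ and on subspaces consistent, namely $\upgam(g_*F)=(\upgam g)_*(\upgam F)$. This is clear since $\gamma$ acts entrywise on matrices and on coordinates in $\Lambda^2V=\Lambda^2V_\R\otimes\C$. No genuine obstacle remains: all the hard content, the triviality of the stabilizer, is hypothesis (a). The argument is the standard cocycle one: the class of $-1\in\C^\times$ is nontrivial in $H^2(\R,\G_m)$, so the relevant class in $\R^\times/N(\C^\times)$ is nontrivial.
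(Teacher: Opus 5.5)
Your proof is correct and is essentially the paper's argument: you use (a) and (b) to get that $g^{-1}\,\upgam g$ is a scalar multiple $\lambda J^{\pm1}$ of $J$, then conjugate that relation to force $\lambda\,\upgam\lambda=-1$. The paper does the last step by multiplying $x=g^{-1}\,\upgam g$ by $\upgam x$, while you apply $\gamma$ and substitute back, which is the same computation. Your observation that $J_*=J^{-1}_*$ on $\Lambda^2V$ is a harmless shortcut the paper does not need.
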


\begin{proof}
It immediately follows from (b) that our orbit is $\gamma$-stable.
Assume for the sake of contradiction that there exists  $g$ as in \eqref{e:real}.
Then
\[ (\hs\upgam g\hs)_*\hs(\hs\upgam F)= g_*(F),\]
whence using (b) we obtain that
\[ \upgam g_*\hs\big(J_*^{-1} (F)\big)
=g_*(F)\]
and
\[(g^{-1}\hs \upgam g\hs J^{-1})_*\hs(F)= F.\]
Then it follows from (a) that
\[g^{-1}\hs\upgam g\hs J^{-1} = \lambda\cdot {\rm Id}_V\quad\ \text{for some}\ \, \lambda\in \C^\times\]
and
\[g^{-1}\cdot\upgam g =\lambda J.\]
Applying the operation $x\rightsquigarrow x\cdot \upgam x$ to this equality, we obtain
\[g^{-1}\hs\upgam g\, \upgam g^{-1}\hs g=\lambda\hs\upgam \lambda\hs J^2,\]
that is,
\[\Id_V=-\lambda\hs\upgam\lambda\,\Id_V\hs,\]
whence
\[\lambda\hs\upgam\lambda=-1,\]
which is a contradiction.
\end{proof}

\begin{corollary}\label{c:nd-C6-F}
For a non-degenerate pair $(\C^6,F)$ with properties  (a) and (b)
of Proposition \ref{p:real-orbit}, let $L=L(\C^6,F)$.
Then $\dgam L\simeq L$, but $L$ admits no real form.
\end{corollary}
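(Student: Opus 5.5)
The statement splits into two parts, and I would treat each by combining Lemma \ref{l:gamma-L}, Lemma \ref{l:real-form} and Proposition \ref{p:real-orbit}.

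\emph{The isomorphism $\dgam L\simeq L$.} By Lemma \ref{l:gamma-L}, it suffices to find $g\in\GL(V)$ with $g_*(\upgam F)=F$. Property (b) of Proposition \ref{p:real-orbit} supplies exactly such an element, namely $g=J$, since $J_*(\upgam F)=F$. Hence $\dgam L(\C^6,F)\simeq L(\C^6,F)$.

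\emph{The absence of a real form.} By Lemma \ref{l:real-form}, with $V_\R=\R^6$ and $V=\C^6$, the Lie algebra $L(V,F)$ admits a real form if and only if there is some $g\in\GL(V)$ with $\upgam(g_*(F))=g_*(F)$. Proposition \ref{p:real-orbit} applies to $F$: hypotheses (a) and (b) hold by assumption, and $F$ has codimension $4$ in $\Lambda^2V$. I read this codimension as part of the setting of the corollary, since the corollary refers to that proposition; in any case, the proof of Proposition \ref{p:real-orbit} never actually uses the codimension. The proposition says that no such $g$ exists. Therefore $L$ admits no real form.

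There is essentially no obstacle here; the corollary is just the combination of these lemmas. The only care needed is to check that the conventions agree. In Lemma \ref{l:gamma-L} the condition is $g_*(\upgam F)=F$, while (b) gives $J_*(\upgam F)=F$, so these match directly. Likewise, the condition in Lemma \ref{l:real-form} is literally \eqref{e:real}. Non-degeneracy of the pair $(\C^6,F)$ is assumed, so both lemmas apply as stated.
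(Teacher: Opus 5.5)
Your proposal is correct and follows the paper's own proof: Lemma \ref{l:gamma-L} with $g=J$ from property (b) gives $\dgam L\simeq L$, and Lemma \ref{l:real-form} combined with Proposition \ref{p:real-orbit} rules out a real form. Your side remark is also accurate: the codimension-$4$ hypothesis is never used in the proof of Proposition \ref{p:real-orbit}.
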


\begin{proof}
We may identify $\dgam L(\C^6,F)$ with $L(\C^6,\upgam F)$.
By Lemmas \ref{l:gamma-L} and \ref{l:real-form},
it follows from Proposition \ref{p:real-orbit}
that $\dgam L \simeq L$, but there is no real structure on $V=\C^6$
under which the subspace  $F\subset \Lambda^2 V$ is defined over $\R$,
that is, stable under complex conjugation.
Thus, our 10-dimensional two-step nilpotent Lie algebra $L=L(\C^6,F)$ with 4-dimensional center
is isomorphic to its complex conjugate, but admits no real form.
\end{proof}

We consider the specific 11-dimensional subspace $F\subset \Lambda^2 V=\Lambda^2\C^6$,
the orthogonal complement of the specific 4-dimensional subspace
$W=\langle w_1,\dots,w_4\rangle $ in $\Lambda^2 V^*=\Lambda^2 (\C^6)^*$
generated by the four vectors $w_1,\dots,w_4$ of  Theorem \ref{t:main}.

\begin{lemma}\label{l:non-deg}
The pair $(\C^6,F)$ is non-degenerate, whence the $*$-pair $(\C^6,W)$ is non-degenerate.
\end{lemma}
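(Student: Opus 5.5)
We argue directly from Definition \ref{d:non-deg-F}, using only that $F=W^\bot$. We must show that for every nonzero $v\in V=\C^6$ there is $v'\in V$ with $v\wedge v'\notin F$. Suppose instead that $v\in V$ satisfies $v\wedge v'\in F$ for all $v'\in V$; we will show that $v=0$. Since $F=W^\bot$ and $W=\langle w_1,\dots,w_4\rangle$, the condition $v\wedge v'\in F$ means $\langle w_\ell, v\wedge v'\rangle=w_\ell(v,v')=0$ for $\ell=1,\dots,4$. So $v$ lies in the common radical of the four skew forms $w_1,\dots,w_4$ on $V_6$, and it suffices to show that this common radical is zero.

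\emph{Step 1: use $w_1$.} The form $w_1=e_4^*\wedge e_5^*+e_3^*\wedge e_6^*$ involves only $e_3,\dots,e_6$, and its restriction to $\langle e_3,e_4,e_5,e_6\rangle$ is symplectic, since it pairs $e_4$ with $e_5$ and $e_3$ with $e_6$. Hence the radical of $w_1$ is exactly $\langle e_1,e_2\rangle$. Concretely, $w_1(v,e_5)$, $w_1(v,e_4)$, $w_1(v,e_6)$ and $w_1(v,e_3)$ are, up to sign, the coordinates of $v$ at $e_4$, $e_5$, $e_3$ and $e_6$ respectively. Therefore $v=a e_1+b e_2$ for some $a,b\in\C$.

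\emph{Step 2: use $w_4$.} Recall that
\[
w_4=e_1^*\wedge e_2^*+\tfrac{i}{2}\,e_1^*\wedge e_3^*-\tfrac{i}{2}\,e_2^*\wedge e_4^*.
\]
For $v=a e_1+b e_2$ this gives $w_4(v,e_3)=\tfrac{i}{2}\,a$ and $w_4(v,e_4)=-\tfrac{i}{2}\,b$. Both values must vanish, so $a=b=0$ and hence $v=0$. Thus $(\C^6,F)$ is non-degenerate in the sense of Definition \ref{d:non-deg-F}.

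Finally, non-degeneracy of the $*$-pair $(\C^6,W)$ follows from Lemma \ref{l:ndp}, because $W=F^\bot$. There is no real obstacle here; the only care needed is checking the sign conventions $e_p^*\wedge e_q^*(e_p,e_q)=1$ in the two small evaluations. Those signs do not affect the conclusion, since only vanishing matters.
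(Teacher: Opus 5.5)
Your proof is correct, and it is essentially the paper's argument. The paper asserts that a straightforward hand calculation shows the center of $L(\C^6,F)=L_\tftf$ is $4$-dimensional, then invokes Proposition~\ref{p:bij} and Lemma~\ref{l:ndp}; since that center equals $R\oplus V_4$, where $R\subset V_6$ is the common radical of $w_1,\dots,w_4$, this is exactly your computation. You check Definition~\ref{d:non-deg-F} directly instead of passing through the Lie algebra, and you actually carry out the calculation the paper leaves to the reader: the radical of $w_1$ is $\langle e_1,e_2\rangle$, and $w_4$ then forces $v=0$.
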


\begin{proof}
Theorem \ref{t:main} explicitly gives the structure constants of the Lie
algebra $L(\C^6,F)$. It is then a straightforward hand calculation to show that
the center of $L(\C^6,F)$ has dimension 4. Proposition \ref{p:bij} shows
that $(\C^6,F)$ is non-degenerate. By Lemma \ref{l:ndp} $(\C^6,W)$ is
non-degenerate as well.
\end{proof}

\begin{lemma}\label{l:(b)}
We have
$J^*(\hs\upgam\, W)= W$, and therefore $F\coloneqq W^\bot$
has property (b) of Proposition \ref{p:real-orbit}.
\end{lemma}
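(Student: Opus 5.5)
The plan is to verify the identity $J^*(\upgam W)=W$ by direct computation on the four generators, and then to transfer it to $F=W^\bot$ by duality.

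\emph{Step 1: the action of $J^*$ on the dual basis.} Here $J^*$ acts on bilinear forms by pullback, $(J^*w)(x,y)=w(Jx,Jy)$. With $J=\mathrm{diag}(j,j,j)$ we have $Je_1=-e_2$ and $Je_2=e_1$, and similarly on the pairs $(e_3,e_4)$ and $(e_5,e_6)$. Hence
\[
J^*e_1^*=e_2^*,\quad J^*e_2^*=-e_1^*,\quad J^*e_3^*=e_4^*,\quad J^*e_4^*=-e_3^*,\quad J^*e_5^*=e_6^*,\quad J^*e_6^*=-e_5^*,
\]
and $J^*$ acts on wedge products factorwise.

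\emph{Step 2: the images of the generators.} Complex conjugation acts on $\Lambda^2 V^*$ coefficientwise in the basis $e_p^*\wedge e_q^*$. The forms $w_1,w_2,w_3$ have real coefficients, so $\upgam w_\ell=w_\ell$ for $\ell\le 3$. For $w_4$, conjugation only flips the sign of the two $\pm\tfrac{i}{2}$ terms.
\begin{itemize}
\item A direct substitution gives $J^*w_1=-e_3^*\wedge e_6^*-e_4^*\wedge e_5^*=-w_1$.
\item Similarly, $J^*(\upgam w_4)=\tfrac{i}{2}e_1^*\wedge e_3^*-\tfrac{i}{2}e_2^*\wedge e_4^*+e_1^*\wedge e_2^*=w_4$. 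Here the sign changes coming from $J^*$ exactly compensate the sign change from conjugation.
\item For $w_2$ and $w_3$, expand $J^*w_2$ and $J^*w_3$ in the basis $e_p^*\wedge e_q^*$ in the same way. Then solve a small linear system to express each as a linear combination of $w_1,\dots,w_4$.
\end{itemize}
I expect $J^*w_2,J^*w_3\in\langle w_2,w_3\rangle$ or close to it, since $w_1,w_4$ involve disjoint monomials. This yields $J^*(\upgam W)\subseteq W$. Since $J^*$ and $\gamma$ are bijective, both sides have dimension $4$, so equality holds.

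The only real content is this bookkeeping for $w_2,w_3$, which is the main (purely computational) obstacle. Each has six terms, and one must check the resulting $4\times 4$ coefficient system carefully.

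\emph{Step 3: passing to $F$.} The pairing $\Lambda^2V^*\times\Lambda^2V\to\C$ is defined over $\R$ in the standard bases, so $\upgam(W^\bot)=(\upgam W)^\bot$. By the invariance $\langle g^*w,g_*\xi\rangle=\langle w,\xi\rangle$ recorded before the corollary following Lemma~\ref{l:ndp}, $J^*(\upgam W)=W$ translates into $J_*\big((\upgam W)^\bot\big)=W^\bot$, i.e.\ $J_*(\upgam F)=F$.

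There is a possible ambiguity about whether pullback corresponds to $J_*$ or to $(J^{-1})_*$. It is harmless: $J^{-1}=-J$, so $J$ and $J^{-1}$ induce the same maps on $\Lambda^2V$ and on $\Lambda^2V^*$ (indeed $J_*^2=\Id$). Hence property (b) of Proposition~\ref{p:real-orbit} holds for $F$.
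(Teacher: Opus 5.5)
Your proposal is correct and is essentially the paper's argument: you check $J^*(\upgam w_\ell)$ generator by generator and then pass to $F=W^\bot$ via the invariant pairing. Your remark that $J$ and $J^{-1}=-J$ induce the same maps on $\Lambda^2 V$ and $\Lambda^2 V^*$ correctly disposes of the convention issue. The bookkeeping you left open does close: substituting your formulas for $J^*e_p^*$ gives $J^*w_2=w_2$ and $J^*w_3=w_3$ exactly, so no linear system is needed. Thus the anti-linear map $w\mapsto J^*(\upgam w)$ fixes $iw_1,w_2,w_3,w_4$, which is precisely the observation the paper records.
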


\begin{proof}
It is easy to check the equality in the lemma  by hand or using a computer; we have checked it both ways.
Note that 
\begin{equation*}
J^*\big(\hs\upgam\hs (i w_1)\hs\big)=i w_1,\quad
J^*(\hs\upgam\hs w_\ell)= w_\ell\quad\ \text{for each}\ \, \ell=2,3,4,
\end{equation*}
and $W$ is spanned by the vectors $iw_1, w_2,w_3,w_4$, 
which are fixed by the anti-linear  transformation $w\mapsto J^*(\hs\upgam w\hs)$.
\end{proof}

\begin{theorem}\label{t:Stab}
For $V=\C^6$, the stabilizer of $[W]\in\Gr\big(4,\Lambda^2V^*\big)$ in $\ol G=\PGL(6,\C)$ is trivial.  
In other words,  if $g\in\GL(6,\C)$ satisfies $g^*(W)=W$, then $g$ is a scalar matrix.
\end{theorem}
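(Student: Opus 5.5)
The approach is to reduce the problem to finite, checkable computations. First I show that the stabilizer is finite modulo scalars. Then I control the finite part through the induced action on $W$, using the Pfaffian cubic.

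\emph{Step 1 (infinitesimal stabilizer).} Consider the Lie algebra of $\Stab_{\GL(V)}(W)$, namely
\[\{X\in\gl(6,\C)\ :\ X\cdot W\subseteq W\},\]
where $X$ acts on $\Lambda^2V^*$ by the derivation $X\cdot(\alpha\wedge\beta)=(X\alpha)\wedge\beta+\alpha\wedge(X\beta)$, with $X$ acting on $V^*$ by $-X^{\TT}$. The condition $X\cdot w_\ell\in W$ for $\ell=1,\dots,4$ is a linear system in the $36$ entries of $X$ together with $16$ auxiliary coefficients. It has $4\cdot 15=60$ equations with rational-Gaussian coefficients. I expect its solution space to be exactly the scalars $\C\cdot\Id$, which I would verify by an exact rank computation. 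Since we are in characteristic $0$, this shows that the stabilizer of $[W]$ in $\ol G=\PGL(6,\C)$ is a finite group $\Gamma$.

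\emph{Step 2 (the induced action on $\mathbb P(W)$).} Any $g$ with $g^*W=W$ induces $h_g\in\PGL(W)$. Moreover, $h_g$ preserves the restriction to $W$ of the Pfaffian, since $\Pf(g^*w)=\det(g)\Pf(w)$. So $h_g$ is an automorphism of the cubic surface
\[S=\{w\in\mathbb P(W)\ :\ \Pf(w)=0\}\subset\mathbb P^3.\]
I would compute $\Pf|_W$ explicitly as a cubic form in four variables and check that $S$ is smooth. I would then determine $\operatorname{Aut}(S)$, for example via the Sylvester pentahedral form or by computing invariants that detect the known automorphism types of cubic surfaces. The hope is that $\operatorname{Aut}(S)$ is trivial, since the tensors were presumably chosen generically subject to the $J$-symmetry of Lemma \ref{l:(b)}. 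That symmetry is antilinear, so it imposes no complex automorphism. If $\operatorname{Aut}(S)$ is a small nontrivial group, then for each $h\in\operatorname{Aut}(S)$ I would solve the system
\[g^*w_\ell=c\, h(w_\ell),\qquad \ell=1,\dots,4.\]
For fixed $h$ this system is linear in $(g,c)$, and I would check that it has no invertible solution unless $h$ is trivial.

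\emph{Step 3 (the kernel).} It remains to treat $g$ acting on $W$ as a scalar $c$, i.e.\ $g^*w_\ell=c\,w_\ell$ for all $\ell$. Choose $w,w'\in W$ with $\Pf(w)\neq0$, and regard them as $6\times6$ skew matrices. Then $g^{\TT}wg=cw$ and $g^{\TT}w'g=cw'$ together give that $g$ commutes with $w^{-1}w'$. Applying this to several pairs, $g$ lies in the commutant of the algebra generated by the operators $w_1^{-1}w_\ell$, provided $w_1$ is nondegenerate (or some other nondegenerate element of $W$ is used as the base point). If that algebra is all of $\Mat_6(\C)$, which is again a rank computation, then $g$ is scalar. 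Combining Steps 2 and 3 gives $\Gamma=1$.

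The main obstacle is Step 2, namely certifying rigorously that the finite group $\Gamma$ is trivial. If computing $\operatorname{Aut}(S)$ proves awkward, my fallback is a direct Gröbner basis computation for $g^*W=W$ after normalizing $\det g=1$. Step 1 guarantees that the resulting ideal is zero-dimensional, and the computation should show that only the $6$th roots of unity times $\Id$ survive.
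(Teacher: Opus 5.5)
Your overall architecture matches the paper's. Step 1 is the same Lie algebra computation. After that you restrict to the action on $W$, which must preserve $f=\Pf|_W$ up to a scalar, and lift each symmetry of $f$ back to $\GL(6,\C)$. However, Step 2 fails as written, in two places.

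\emph{First, $S$ is not smooth.} None of $w_1,\dots,w_4$ involves $e_5^*\wedge e_6^*$, while $w_4$ has rank $4$ with kernel $\langle e_5,e_6\rangle$. So $\mathbb P(W)$ is tangent to the Pfaffian hypersurface at $[w_4]$. Concretely, in the coordinates $x$ with respect to $w_1,\dots,w_4$,
$$f=q_2(x_1,x_2,x_3)\,x_4+q_3(x_2,x_3),\qquad q_3=-2x_2x_3(x_2+x_3),$$
with $q_2$ a nondegenerate quadratic form, so $S$ has a node at $[0:0:0:1]$.

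This has two consequences. Neither the Sylvester pentahedral form nor the classification of automorphism groups of smooth cubic surfaces is available. And $\operatorname{Aut}(S)$ is not trivial: the reflection $\phi\colon(x_1,x_2,x_3,x_4)\mapsto(-x_1+ix_2,x_2,x_3,x_4)$ preserves $q_2$, hence $f$.

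The paper obtains $\Stab_{\GL(W)}(\C^\times f)=\C^\times\Id\cup\C^\times\phi$ by a Gr\"obner basis computation in the entries of a $4\times4$ matrix, which finishes in minutes. Your fallback, a Gr\"obner basis for $g^*W=W$ directly in $\GL(6,\C)$, is exactly the computation the authors report did not terminate, even when restricted to elements of order $3$.

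\emph{Second, the system $g^*w_\ell=c\,h(w_\ell)$ is quadratic in $g$, not linear in $(g,c)$.} Linearizing it is precisely the key idea needed at this point.

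The paper does this with degenerate members of $W$. In its convention $g^*A=gAg^\TT$, the relation $gA(x)g^\TT\in\C^\times A(Mx)$ forces $g\cdot\im A(x)=\im A(Mx)$. This gives the linear equations $n^\TT g\,u=0$ for $n\in\ker A(Mx)$ and $u\in\im A(x)$, imposed at six points $x$ with $\Pf A(x)=0$. For $M=\phi$ the only solution is $0$, and for $M=\Id$ only scalars.

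Alternatively, your own Step 3 device extends to nontrivial $h$. Pick $A\in W$ with $\Pf A\neq0$; note that $w_1$ itself is degenerate, but $w_1+w_4$ is not. Then $A'=h(A)$ is nondegenerate too. In your convention, $g^\TT Ag=cA'$ and $g^\TT Bg=cB'$ with $B'=h(B)$ give $A^{-1}B\,g=g\,A'^{-1}B'$ for all $B\in W$, which is linear in $g$. You would then have to check that, for $h=\phi$, no solution of this system is invertible and satisfies $g^\TT Ag\in\C^\times A'$.

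With these two repairs your argument becomes essentially the paper's proof. Your Step 3 is then a legitimate alternative to its kernel/image equations in the case $M=\Id$.
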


Theorem \ref{t:Stab} will be proved in Section \ref{s:Stab-W}.

\begin{proof}[Proof of Theorem \ref{t:main} modulo Theorem \ref{t:Stab}]
Take $V=\C^6$, $F=W^\bot\subset \Lambda^2 V$.
The surjection 
$\Lambda^2 V\to \C^4, \ \,\xi\mapsto (\langle w_\ell,\xi\rangle)_\ell\hs$,
has kernel $F$ and identifies $L_\tftf$ with $L(\C^6,F)$.
By Lemma \ref{l:non-deg} the pair $(V,F)$ is non-degenerate.
It follows from Theorem \ref{t:Stab}
that the stabilizer  $\ol G_F$ in $\ol G$ of  $F=W^\bot$ is trivial,
that is, $F$ has property (a) of Proposition \ref{p:real-orbit}.
By Lemma \ref{l:(b)} the subspace  $F$ has property (b) of Proposition \ref{p:real-orbit}.
Now Theorem \ref{t:main} follows from Corollary \ref{c:nd-C6-F}.
\end{proof}

\begin{proof}[Proof of Theorem \ref{t:m} modulo Theorem \ref{t:main}]
For $L$ as in Theorem \ref{t:main},
we must prove that  (i) $L'\coloneqq L\times \C^m$ is isomorphic to its complex conjugate,  but (ii) $L'$ admits no real form.

We prove (i). We have 
\[\dgam L'=\dgam(L\times \C^m)=\dgam L\times\dgam\hs\C^m\simeq L\times\C^m=L',\]
because $\dgam L\simeq L$ by Theorem \ref{t:main}(i), and clearly $\dgam\hs\C^m\cong\C^m$.

We prove (ii).
By Proposition \ref{p:dg-R-C} applied to the field extension $\C/\R$
and the non-degenerate two-step nilpotent  Lie algebra $L$ over $\C$,
if $L'$ admits a real form, then also $L$ must admit a real form,
but this contradicts Theorem \ref{t:main}(ii).
Hence $L'$ admits no real form, as desired.
\end{proof}

\begin{construction}\label{cons:W}
We explain how one can come to the four tensors  $w_1,\dots,w_4$ of \eqref{e:WW} in Theorem \ref{t:main}.
For $1\le j<k\le 6$,
consider the tensors
\[ e^*_{jk}=e_j^*\wedge e_k^*\in \Lambda^2 V_6^*.\]

For $1\le j<k\le 6$ consider the 30 tensors
\[   e_{jk}^*+ J^*(e_{jk}^*)\quad\text{and}\quad i\cdot\big(e_{jk}^*- J^*(e_{jk}^*)\big)\]
where $i^2=-1$.
Of these 30 tensors, choose 15 linearly independent: 
\begin{center}
\renewcommand{\arraystretch}{1.25}
\begin{tabular}{@{}llll@{}}
$f_1=2e^*_{12}$ & $f_2=e^*_{13}+e^*_{24}$ & $f_3=\ii(e^*_{13}-e^*_{24})$ & $f_4=e^*_{14}-e^*_{23}$\\
$f_5=\ii(e^*_{14}+e^*_{23})$ & $f_6=e^*_{15}+e^*_{26}$ & $f_7=\ii(e^*_{15}-e^*_{26})$ & $f_8=e^*_{16}-e^*_{25}$\\
$f_9=\ii(e^*_{16}+e^*_{25})$ & $f_{10}=2e^*_{34}$ & $f_{11}=e^*_{35}+e^*_{46}$ & $f_{12}=\ii(e^*_{35}-e^*_{46})$\\
$f_{13}=e^*_{36}-e^*_{45}$ & $f_{14}=\ii(e^*_{36}+e^*_{45})$ & $f_{15}=2e^*_{56}$ & \\
\end{tabular}
\end{center}
Then $J^*(\hs\upgam f_\vk\hs) = f_\vk$ for all $\vk=1,\dots,15$.

We choose $w_1,\dots,w_4$ to be linear combinations of $f_\vk$ with all coefficients real, 
or with all coefficients purely imaginary.
Then $J^*(\hs\upgam\hs w_\ell\hs) =\pm w_\ell$\hs,  whence 
$\langle J^*(\hs\upgam\hs w_\ell\hs)\rangle =\langle w_\ell\rangle$
for $\ell=1,\dots,4$.
We set $W=\langle w_1,\dots, w_4\rangle\subset  \Lambda^2 V_6^*$.
Then $J^*(\hs\upgam\hs W\hs)=W$.

We choose $w_1,\dots, w_4$ to be linear combinations of $f_\vk$ 
with ``nice''  coefficients and with not too many zero coefficients,
and we check using a computer whether the stabilizer of $W$ in $\gl(6,\C)$ consists of scalars only.
If yes, then this set $w_1,\dots,w_4$ is a candidate for checking
whether the stabilizer of $W$ in $\GL(6,\C)$ consists of scalars only.
This is how one can come to the set $w_1,\dots,w_4$ of Theorem \ref{t:main}.
\end{construction}

\begin{remark} \label{cons:W'}
We explain another method permitting  one to construct many Lie algebras with the required properties.
We choose 60 pseudo-random numbers $a_1,\dots,a_{60}$
taking values $-1,\ 0,\ 1$ with the same probability $1/3$,
for example,
\begin{center}
\begin{tabular}{c c c c c c c c c c c c c c c c  }
$a_1$ – $a_{15}$:   &1 &0 &$-1$ &0 &1 &$-1$ &$-1$ &$-1$ &$-1$ &1  &0  &1 &$-1$ &0 &1\\
$a_{16}$ – $a_{30}$:&0 &$-1$ &0 &1 &1  &0 &0  &1  &$-1$ &$-1$ &0 &$-1$ &0 &0 &1\\
$a_{31}$ – $a_{45}$:  &0 &1  &0 &1 &0 &0 &0  &$-1$ &$-1$ &1  &0 &$-1$ &0 &1 &1\\
$a_{46}$ – $a_{60}$:  &1 &$-1$ &1 &1 &0 &0 &$-1$ &1  &1  &0  &0 &0  &0 &0 &0
\end{tabular}
\end{center}
and consider the 4 tensors
\[w'_\ell=\sum_{\vk=1}^{15} a_{15(\ell-1)+\vk}\cdot f_\vk\hs, \quad \ell=1,\dots,4.\]
Let $W'\subset \Lambda^2 V_6^*$ denote the subspace generated by $w_1',\dots,w_4'$\hs.
Again, we have by construction that $J^*(\hs\upgam\, W'\hs)=W'$.
Our computations show that in this and many similar examples,
the $*$-pair $(V_6,W')$ is non-degenerate with
stabilizer of $W'$ in $\GL(6,\C)$ consisting of scalars only,
and therefore the corresponding Lie algebra $L_{\tftf'}$ 
with $\tftf'=\tftf(w_1',\dots,w_4')$ as in \eqref{e:tf}
is isomorphic to its complex conjugate but admits no real form.
\end{remark}

\section{The stabilizer of the 4-dimensional subspace}
\label{s:Stab-W}

In this section we prove Theorem \ref{t:Stab}.

Using a computer, we can easily show that the {\em Lie algebra} 
of the stabilizer $\ol G_W$ of $W$ in $\ol G=\PGL(6,\C)$ is trivial;
see Implementation \ref{exa:1}
(we used such computations when choosing  our specific $W$).
Therefore, $\ol G_W$ is finite.
Using the Gr\"obner basis algorithm,
we were able to compute the set of involutions in $\ol G_W$: no involutions.
However, a Gr\"obner basis computation of the whole finite group $\ol G_W$
or even of the set of elements of order 3 in $\ol G_W$
did not terminate on the computers available to us.

Instead, we use an idea of the LLM Claude Fable 5. Let $H=G_W$ denote the stabilizer of $W$
in $G=\GL(6,\C)$. Then $H$ naturally acts on $W$.

We identify $\Lambda^2 V^*$ with the space $\Mat_6^\Skew$ 
of skew-symmetric $6\times 6$ matrices.
Namely, we identify a skew-symmetric matrix $A\in \Mat_6^\Skew$ 
with the  skew-symmetric bilinear form
\begin{equation}\label{e:bilinear}
(u,v)\mapsto u\hs A \, v^\TT\in\C
\end{equation}
where $u,v\in\C^6$ are {\em row} vectors and the superscript $^\TT$ denotes the transposed matrix.
Let $g\in\GL(V)=\GL(6,\C)$ act on the right on row vectors $u\in V=\C^6$ by $u\mapsto u g$, 
and act on the left by $u\mapsto u g^{-1}$.
Then under the identification \eqref{e:bilinear},  
the left action of $g$ on $\Lambda^2V^*$ becomes
\begin{equation}\label{eq:action}
A\,\mapsto\, g A g^\TT.
\end{equation}
This is the action denoted by $g^*$ in Sections \ref{s:V-F}--\ref{s:F}; thus $g^*(A)= g A g^\TT$.

The determinant $\det(A)$ of a skew-symmetric matrix $A$ is the square of a polynomial $\Pf(A)$
called the {\em Pfaffian} of $A$; see, for instance, \cite[XV.\S9]{Lang}.
The Pfaffian satisfies the equivariance
\begin{equation}\label{eq:equivariance}
\Pf(g A g^\TT)=\det(g)\,\Pf(A).
\end{equation}
Since  $\Pf(A)^2=\det A$,  a $6\times6$ skew-symmetric matrix
$A$ has $\rk A\le4$ if and only if $\Pf(A)=0$.

In particular, for each $w=w_1,\dots,w_4$, we identify the skew-symmetric bilinear form
\[w=\sum_{k<j}c_{kj}\,e_k^*\wedge e_j^*\in\Lambda^2V^*\]
with the skew-symmetric matrix 
\[A_w=(c_{kj})\in {\rm Mat}_6^\Skew(\C),\quad\ A_w[k,j]=c_{kj}=-A_w[j,k].\]
For $x=(x_1,x_2,x_3,x_4)\in\C^4$, put
\[A(x)=x_1A_{w_1}+x_2A_{w_2}+x_3A_{w_3}+x_4A_{w_4}\hs.\]
Explicitly,
\begin{equation}\label{eq:AAmatrix}
A(x)=\begin{pmatrix}
0 & x_4 & \tfrac{i}{2}x_4 & x_3 & x_2 & -x_2-x_3\\[1pt]
-x_4 & 0 & -x_3 & -\tfrac{i}{2}x_4 & x_2+x_3 & x_2\\[1pt]
-\tfrac{i}{2}x_4 & x_3 & 0 & 0 & -x_2-x_3 & x_1\\[1pt]
-x_3 & \tfrac{i}{2}x_4 & 0 & 0 & x_1 & -x_2-x_3\\[1pt]
-x_2 & -x_2-x_3 & x_2+x_3 & -x_1 & 0 & 0\\[1pt]
x_2+x_3 & -x_2 & -x_1 & x_2+x_3 & 0 & 0
\end{pmatrix}.
\end{equation}

\begin{lemma}
With $f(x):=\Pf(A(x))$ one has
\begin{equation}\label{eq:f}
\begin{aligned}
&f(x)=x_1^2x_4-i\,x_1x_2x_4-2x_2^2x_3-x_2^2x_4-2x_2x_3^2-2x_2x_3x_4-x_3^2x_4
=q_2(x_1,x_2,x_3)\,x_4+q_3(x_2,x_3),\\
&\text{where}\quad\ 
q_2(x_1,x_2,x_3)=x_1^2-i\,x_1x_2-x_2^2-2x_2x_3-x_3^2,\quad\ 
q_3(x_2,x_3)=-2x_2x_3(x_2+x_3).
\end{aligned}
\end{equation}
\end{lemma}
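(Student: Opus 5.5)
We compute $f(x)=\Pf(A(x))$ directly from the explicit matrix \eqref{eq:AAmatrix} by expanding along the first row. For a $6\times 6$ skew-symmetric matrix $A=(a_{kj})$ one has
\[
\Pf(A)=a_{12}P_{3456}-a_{13}P_{2456}+a_{14}P_{2356}-a_{15}P_{2346}+a_{16}P_{2345},
\]
where $P_{pqrs}=a_{pq}a_{rs}-a_{pr}a_{qs}+a_{ps}a_{qr}$ is the Pfaffian of the $4\times4$ principal submatrix on rows and columns $p<q<r<s$. This is the standard recursive (matching) expansion; see \cite[XV.\S9]{Lang}. Since every entry of $A(x)$ is linear in $x$, each of the five terms is a cubic form, and $f$ is the sum of fifteen monomials $\pm a\,a\,a$, one for each perfect matching of $\{1,\dots,6\}$.

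The key simplification is that $a_{34}=a_{56}=0$, so every matching containing $\{3,4\}$ or $\{5,6\}$ drops out. For instance, the first term gives
\[
a_{12}P_{3456}=x_4\bigl(0-a_{35}a_{46}+a_{36}a_{45}\bigr)=x_4\bigl(x_1^2-(x_2+x_3)^2\bigr),
\]
which already produces $x_1^2x_4-x_2^2x_4-2x_2x_3x_4-x_3^2x_4$. The two terms coming from $a_{13}=\tfrac{i}{2}x_4$ and $a_{14}=x_3$ should supply the mixed term $-i\,x_1x_2x_4$. The terms from $a_{15}=x_2$ and $a_{16}=-x_2-x_3$ should then supply the $x_4$-free part. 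In those terms the contributions involving $a_{23},a_{24}$ must combine, and the remaining ones should add up to $q_3=-2x_2x_3(x_2+x_3)$.

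Concretely, we compute the four remaining $4\times4$ Pfaffians $P_{2456},P_{2356},P_{2346},P_{2345}$ one at a time, multiply each by its signed first-row entry, and collect all terms by monomials. Finally we group the result into the part linear in $x_4$ and the part free of $x_4$; note that no $x_4^2$ or $x_4^3$ terms can occur. This gives the decomposition $f=q_2\,x_4+q_3$ with $q_2,q_3$ as in \eqref{eq:f}.

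The main obstacle is purely bookkeeping: keeping track of the signs in the expansion and of the factors $\pm\tfrac{i}{2}$. In particular, the $x_4^2$ contributions, such as $a_{13}a_{24}$-type products, must cancel; this cancellation is a good consistency check. As an independent check, we would verify $f(x)^2=\det A(x)$ at a few numerical points, or confirm the identity symbolically with the computer algebra code used in Section~\ref{s:Stab-W}.
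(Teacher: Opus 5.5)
Your route is correct and does yield the stated cubic, but it differs from the paper's. The paper gives no hand derivation: it evaluates $\Pf(A(x))$ with Magma's \texttt{Pfaffian} function (Implementation~\ref{exa:2}). You instead expand by hand along the first row. Your expansion formula and the $4\times4$ Pfaffian formula are the standard ones. The hand computation makes the lemma checkable without software and explains the shape $q_2x_4+q_3$. The variable $x_4$ occurs only in $a_{12},a_{13},a_{24}$, and the only perfect matching using two of these is $\{1,3\},\{2,4\},\{5,6\}$, which vanishes because $a_{56}=0$. So $f$ is linear in $x_4$, exactly as your own remark shows. The machine computation buys freedom from bookkeeping errors, and in this proposal those errors did appear.

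Several of your predictions about where terms come from are wrong, so they should not be used as consistency checks. Carrying out your plan gives
$P_{2456}=(x_2+x_3)^2+x_1x_2$, $P_{2356}=-(x_1+x_2)(x_2+x_3)$, $P_{2346}=x_3(x_2+x_3)+\tfrac{i}{2}x_1x_4$, and $P_{2345}=-x_1x_3-\tfrac{i}{2}x_4(x_2+x_3)$.

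\begin{itemize}
\item The $a_{14}$-term $-x_3(x_1+x_2)(x_2+x_3)$ contains no $x_4$ at all. So $-i\,x_1x_2x_4$ does not come from the $a_{13}$- and $a_{14}$-terms. It comes half from $-a_{13}P_{2456}$ and half from $-a_{15}P_{2346}$.
\item Two genuine cancellations occur. The terms $\mp\tfrac{i}{2}x_4(x_2+x_3)^2$ from the $a_{13}$- and $a_{16}$-terms cancel. The terms $\mp x_1x_3(x_2+x_3)$ from the $a_{14}$- and $a_{16}$-terms also cancel.
\item The $x_4$-free part does not come from the $a_{15}$- and $a_{16}$-terms. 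It is $q_3$, the sum of the two contributions $-x_2x_3(x_2+x_3)$ from the $a_{14}$- and $a_{15}$-terms.
\item There is no $x_4^2$ cancellation to check. The single candidate product $a_{13}a_{24}a_{56}$ is identically zero.
\end{itemize}

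Summing $a_{12}P_{3456}=x_4\bigl(x_1^2-(x_2+x_3)^2\bigr)$ with the four remaining terms then gives exactly $q_2x_4+q_3$.
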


\begin{proof}
A Magma computation using the Magma function \texttt{Pfaffian};
see Implementation \ref{exa:2}.  
\end{proof}

We consider the group $\GL(W)$, which we identify with $\GL(4,\C)$
using the basis $w_1,\dots,w_4$ of $W$.
Since by \eqref{eq:equivariance} the group $\GL(V)=\GL(6,\C)$
acting by \eqref{eq:action} preserves the Pfaffian up to a nonzero scalar,
so does the image $H|_W\subset \GL(W)$  of $H=\Stab_{\GL(V)}(W)$.
Thus
\begin{equation}\label{e:H|_W}
H|_W\subseteq\Stab_{\GL(W)}(\C^\times\scdot f) \quad\ \text{where $f$ is the cubic polynomial \eqref{eq:f}.}
\end{equation}

\begin{proposition}\label{p:Stab(f)}
$\Stab_{\GL(W)}(\C^\times\scdot f)=(\C^\times\scdot\Id_4)\cup(\C^\times\scdot \phi)$,
where
\begin{equation*}
\phi =
\begin{pmatrix}
-1 &i&0&0\\
0&1&0&0\\
0&0&1&0\\
0&0&0&1
\end{pmatrix}.
\end{equation*}
\end{proposition}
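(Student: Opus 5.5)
The plan is to exploit the geometry of the cubic surface $S=\{f=0\}\subset\mathbb P(W^*)=\mathbb P^3$. Let $g\in\GL(W)$ with $f\circ g=c\,f$ for some $c\in\C^\times$. Then $g$ preserves $S$ and hence its singular locus.

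\textbf{Step 1: a canonical singular point.} Since $f=q_2\,x_4+q_3$ with $q_2,q_3$ independent of $x_4$, the point $p=(0:0:0:1)$ is a double point of $S$ with tangent cone $\{q_2=0\}$. I would compute the remaining singular points by solving $\partial f/\partial x_j=0$ for all $j$, which is a small Gröbner/resultant computation, for instance in Magma. The expected outcome is that $p$ is the unique singular point, or at least the unique one of its analytic type (e.g.\ the unique $A_1$ point with these lines through it). Then $g$ fixes $p$. Equivalently, $g$ preserves the span of the linear forms $x_1,x_2,x_3$ vanishing at $p$. So in these coordinates $g$ acts by
\[
(x_1,x_2,x_3)\mapsto M(x_1,x_2,x_3),\qquad x_4\mapsto a\,x_4+\ell(x_1,x_2,x_3),
\]
with $M\in\GL(3,\C)$, $a\in\C^\times$ and $\ell$ a linear form.

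\textbf{Step 2: reduction to a plane configuration.} Comparing the coefficients of $x_4$ and the $x_4$-free parts in $f\circ g=cf$ gives
\[
a\,q_2(Mx)=c\,q_2(x),\qquad q_2(Mx)\,\ell(x)+q_3(Mx)=c\,q_3(x).
\]
The Gram matrix of $q_2$ has determinant $-\tfrac14\neq0$, so $q_2$ is a smooth conic $C\subset\mathbb P^2$ and $M$ lies in the conformal orthogonal group of $q_2$, which is isomorphic to $\PGL_2$ acting on $C\cong\mathbb P^1$ projectively. The second identity shows $q_3(Mx)\equiv c\,q_3(x)\pmod{q_2}$. Hence $M$ preserves the divisor $C\cap\{q_3=0\}$. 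This divisor consists of the six points where $C$ meets the three concurrent lines $x_2=0$, $x_3=0$, $x_2+x_3=0$. These six points are exactly the lines of $S$ through $p$.

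\textbf{Step 3: the stabilizer of six points on $\mathbb P^1$.} I would parametrize $C$ explicitly and compute the six points $t_1,\dots,t_6\in\mathbb P^1$ together with their pairing into three pairs. The pairs correspond to the three lines, and since the lines are concurrent at $(1:0:0)$, each pair is swapped by one common involution of $C$, namely the projection from that vertex. Any $M$ must permute the six points preserving the pairing and, by the $q_3$-identity, fix the vertex. Using cross-ratios, I would check that the only nontrivial such element of $\PGL_2$ is the one induced by $\phi$, which acts on $(x_1,x_2,x_3)$ by $x_1\mapsto -x_1+ix_2$. This should cut the possibilities for $M$ down to scalars and scalar multiples of that $3\times3$ block.

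\textbf{Step 4: recovering $\ell$ and $a$, and verification.} For each surviving $M$, the cubic $c\,q_3-q_3(Mx)$ must equal $q_2(Mx)\,\ell$. Since $q_2$ is irreducible, this determines $\ell$ uniquely, and $a$ is determined by the first identity. A direct substitution should show $\ell=0$ in both cases and confirm that $f\circ\phi=f$, giving exactly $(\C^\times\scdot\Id_4)\cup(\C^\times\scdot\phi)$.

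\textbf{Main obstacle.} I expect Step 1 (proving $p$ is the only singular point, or otherwise distinguished) and the finite case analysis in Step 3 to be the hardest parts. For Step 1, if other singular points appear, I would distinguish $p$ by its singularity type, or else by the configuration of lines through it. For Step 3, the first attempt would be to compute the $j$-type invariants of the six points directly.
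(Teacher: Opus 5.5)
\textbf{Comparison with the paper.} Your route is genuinely different from the paper's, and it works once one step in Step~3 is repaired. The paper proves the proposition by brute force: a single Gr\"obner basis computation in Magma (Implementation~\ref{exa:3}) in the sixteen entries of $g$ plus two auxiliary variables, from whose reduced basis the two families $\C^\times\cdot\Id_4$ and $\C^\times\cdot\phi$ are read off. You instead use the geometry of the cubic surface $\{f=0\}$.

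The computations you leave as ``expected'' do come out as you hope. $p=(0:0:0:1)$ is the \emph{unique} singular point:
\begin{itemize}
\item for $x_4=0$ the partials force $x_2=x_3=0$, and then $q_2=x_1^2$ forces $x_1=0$;
\item for $x_4=1$, $\partial_1 f=0$ gives $x_1=\tfrac{i}{2}x_2$, then $\partial_4 f=q_2=-\tfrac14(3x_2+2x_3)(x_2+2x_3)$, and on each branch $\partial_2 f=\partial_3 f=0$ leave only $x_2=x_3=0$.
\end{itemize}
So the block form of $g$ and Step~2 are fine. Step~4 is also fine: $\phi$ fixes $x_2,x_3$, so $q_3\circ M=\mu^3q_3$, and $q_2\nmid q_3$ forces $\ell=0$.

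Your approach explains the answer: $\phi$ is the involution of the conic $C$ induced by projection from the common point of the three lines. It is also checkable by hand. The paper's argument is shorter to state but rests entirely on a black-box computation.

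\textbf{The gap.} In Step~3 you assert that $M$ preserves the pairing and fixes $(1:0:0)$ ``by the $q_3$-identity''. That identity holds only modulo $q_2$:
\[
q_3\circ M=c\,q_3-(c/a)\,q_2\,\ell .
\]
A priori this could be a \emph{different} triple of concurrent chords of $C$ cutting out the same six points. Ruling that out is equivalent to $\ell=0$, which is what you are trying to prove, so the argument is circular at that point.

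\textbf{The fix.} Compute the full stabilizer of the six-point set in $\mathrm{Aut}(C)\cong\PGL_2$ without assuming the pairing.
\begin{itemize}
\item Put $u=x_1-\tfrac{i}{2}x_2$ and $y=x_2+x_3$. Then $q_2=u^2+\tfrac14x_2^2-y^2$, and $(u,x_2,y)=(st,\;t^2-s^2,\;\tfrac12(s^2+t^2))$ parametrizes $C$.
\item The six points become $\lambda=t/s\in\{\pm1,\pm i,\pm\sqrt3\}$, lying on $x_2=0$, $x_2+x_3=0$ and $x_3=0$ respectively. The map $\phi$ acts by $u\mapsto-u$, i.e.\ $\lambda\mapsto-\lambda$.
\item The complements of the three pairs are $\{\pm1,\pm i\}$, $\{\pm1,\pm\sqrt3\}$ and $\{\pm i,\pm\sqrt3\}$. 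Their $j$-invariants are $1728$, $256(91-52\sqrt3)^3/(90-52\sqrt3)^2$ and $2048/3$.
\item The remaining twelve $4$-subsets have $j=-512(\sqrt3-1)$ (two of them), $j=512(\sqrt3+1)$ (two of them), or non-real $j$ (the other eight).
\end{itemize}
Hence every projective symmetry of the six points fixes each pair setwise. Two of the three sign choices $\lambda_0\mapsto\pm\lambda_0$ must then agree, which forces $\lambda\mapsto\pm\lambda$.

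With this replacement your proof is complete. Uniqueness of the concurrent pairing, and hence the fixed vertex, then follows a posteriori rather than being assumed.
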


\begin{proof}
A computer computation using the Gr\"obner basis algorithm;
see Implementation \ref{exa:3}.  
\end{proof}

Now  let $x\in\C^4$, and let $A(x)$ be the skew-symmetric matrix as in \eqref{eq:AAmatrix}.
Let $M=\Id_4$ or $M=\phi\in\GL(4,\C)$.
Let $G=\GL(6,\C)$.
We wish to find all $g\in G$ such that
\begin{equation}\label{e:M}
g A(x) g^\TT=A(Mx)\quad\ \text{up to scalar, for all}\ \, x\in\C^4.
\end{equation}
From this non-linear condition, following an idea of the LLM Claude Fable 5, we pass to linear ones.

Since $A(x)$ and $A(Mx)$ are skew-symmetric,
the annihilator of $\im A$ is $\ker A$: indeed, we have
\[ n^\TT A=-(An)^\TT.\]
Note that \eqref{e:M} implies
\[\im(g\cdot A(x)\hs)=\im A(Mx),\]
because $g^\TT$ is invertible.
Hence, we have
\begin{equation}\label{e:lin}
n^\TT\cdot g\cdot u=0\quad\ \text{for all}\ \, n\in \ker A(Mx),\ u\in \im A(x),\ x\in \C^4,
\end{equation}
where $n$ and $u$ are column vectors. 
We obtain {\em linear} equations for $g$. We solve them using a computer;
see Implementation \ref{exa:4}.

We obtain the following results:
for $M=\Id_4$ we have $g\in \C^\times\scdot\hs \Id_6$, 
and for $M=\phi$ the only solution is 0, hence no invertible $g$.
Conversely, any $g\in \C^\times\scdot\hs \Id_6$ clearly satisfies \eqref{e:M}.
Thus we obtain:

\begin{lemma}\label{l:M}
The preimage of $(\C^\times\scdot\hs\Id_4)\cup(\C^\times\scdot \phi)\subset \GL(W)$
in $\Stab_{\GL(V)}(W)$ is $\C^\times\scdot\hs \Id_6$.
\end{lemma}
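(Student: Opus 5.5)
The argument rests on one reduction: every $g$ in the preimage satisfies \eqref{e:M} for some fixed $M$. Let $g\in H=\Stab_{\GL(V)}(W)$ have image $g|_W\in(\C^\times\scdot\Id_4)\cup(\C^\times\scdot\phi)$. By definition of the restriction, $g^*(w_\ell)=gA_{w_\ell}g^\TT$ is the image of $w_\ell$ under $g|_W$. Writing $g|_W=\mu M$ with $\mu\in\C^\times$ and $M\in\{\Id_4,\phi\}$, linearity in $x$ gives
\[ gA(x)g^\TT=\mu\hs A(M'x)\quad \text{for all } x\in\C^4, \]
where $M'$ is $M$ or its transpose, depending on the coordinate convention. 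So $g$ satisfies \eqref{e:M} with the appropriate matrix. I will check the transpose convention once and for all. Since $\phi^2=\Id_4$, the set $\{\Id_4,\phi\}$ is closed under inversion. If transposition turns out to be needed, I will simply run the linear computation below for $\phi^\TT$ as well.

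From \eqref{e:M} I derive linear conditions on $g$. Since $g$ and $g^\TT$ are invertible, $\im\big(gA(x)\big)=\im\big(gA(x)g^\TT\big)=\im A(Mx)$. Skew-symmetry gives $(\im A(Mx))^\perp=\ker A(Mx)$, and this yields \eqref{e:lin}. For each $M$, I compute the solution space $S_M\subset\Mat_6(\C)$ of the linear system \eqref{e:lin}.

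To make the system finite, I specialise $x$ to finitely many points, chosen generic or with small rational/Gaussian-integer entries. I will include points on the Pfaffian hypersurface $f=0$ (for instance $x_2=x_3=0$ with $x_1=0$, or points where $q_2x_4+q_3=0$). At such points $A(x)$ has rank $\le 4$, so $\ker A(Mx)\neq0$ and the conditions are nontrivial. At generic $x$ the matrix $A(x)$ is invertible and gives no condition. For each chosen $x$ I compute bases of $\ker A(Mx)$ and $\im A(x)$ and impose $n^\TT g u=0$. Adding points can only shrink the solution space, so it suffices to find enough points that the computed $S_{\Id_4}$ is $\C\scdot\Id_6$ and $S_\phi=0$. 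This is a finite exact linear-algebra computation over $\mathbb{Q}(i)$, done as in Implementation \ref{exa:4}.

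The conclusion then follows from two facts. Every element of the preimage lies in $S_{\Id_4}\cap\GL_6=\C^\times\scdot\Id_6$ or in $S_\phi\cap\GL_6=\emptyset$. Conversely, scalars $\lambda\Id_6$ stabilise $W$ and act on $W$ as $\lambda^2\Id_4$, so they lie in the preimage. The main obstacle is choosing enough points on the singular locus $f=0$, together with enough rank-$\le 4$ kernel vectors, to cut the solution space down to exactly the scalars. Points with rank $2$ (if any exist in $W$) give the strongest constraints, so I would look for those first. If finitely many points are not sufficient, I will instead work over the function field, parametrising a component of $f=0$: for $x_4\neq0$ one can solve $x_4=-q_3/q_2$. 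I will then treat $\ker A(Mx)$ symbolically via adjugate minors.
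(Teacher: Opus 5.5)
Your proposal is correct and is essentially the paper's proof. You reduce to \eqref{e:M}, pass to the linear conditions \eqref{e:lin} via $\im A(Mx)^\perp=\ker A(Mx)$, and solve them at finitely many points of the Pfaffian hypersurface. The paper does this with the six points $(1,0,0,0),\dots,(0,0,0,1),(1,1,0,0),(1,0,1,0)$, all with $f=0$, which already give $S_{\Id_4}=\C\scdot\Id_6$ and $S_\phi=0$.

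Your transpose hedge is unnecessary. With the standard identification (columns of $M$ are the coordinates of $g^*(w_j)$), one gets exactly $gA(x)g^\TT=\mu A(Mx)$. This is also the convention in which Proposition \ref{p:Stab(f)} computes the matrices $M$ with $f(Mx)\in\C^\times f(x)$.
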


\begin{proof}[Proof of Theorem \ref{t:Stab}]
By \eqref{e:H|_W}, we have $H|_W\subseteq \Stab_{\GL(W)}(\C^\times\scdot f)$.
Using  Proposition \ref{p:Stab(f)}, we obtain that
\[H|_W\hs\subseteq\hs\Stab_{\GL(W)}(\C^\times\scdot f)\hs\subseteq\hs (\C^\times\scdot\hs\Id_4)\cup(\C^\times\scdot \phi).\]
Using Lemma \ref{l:M}, we obtain that $H\subseteq \C^\times\scdot\hs \Id_6$.
Thus $H= \C^\times\scdot\hs \Id_6$\hs, as desired.
\end{proof}

This completes the proofs of  Theorems \ref{t:main} and \ref{t:m}; see Section \ref{s:F}.

\begin{remark}
It is known that the generic stabilizer for the action of
$\PGL_6$ on the Grassmannian $\Gr\big(4,\Lambda^2 V_6\big)$ is trivial.
This follows, for example, from \cite[Table 1.4]{GL24}.
Indeed, that result shows that the stabilizer for the algebraic group is generically trivial,
and this is sufficient in characteristic zero.
In positive characteristic, one wants to show triviality of the generic
stabilizer as a group scheme.  Given the result about the algebraic group, it suffices to
check that the Lie algebra stabilizer is generically $0$.  This can be
checked at a single point  (presumably in characteristic not $2$ the
same point will work).
As in \cite{GG20}, if we are in characteristic $p$, it suffices to
check elements $x$ in the Lie algebra satisfying $x^p=x$ or $x^p=0$
and show that the dimension of the $\PGL_6$ orbit of $x$ plus the dimension of the
variety of fixed points of $x$ on the Grassmannian is strictly less
than $44$ (the dimension of the Grassmannian).
This is a straightforward linear algebra computation by considering
the various possibilities for the similarity class of $x$.

Unfortunately, this does not give rise to a Lie algebra example as in
characteristic zero.  Indeed, by the standard cohomology argument and
Lang's theorem, if $L$ is a finite-dimensional Lie algebra in characteristic $p$ with
connected automorphism group and $L$ is isomorphic to its $q$-power
Frobenius twist, then $L$ is defined over the field of $q$ elements.
\end{remark}

\subsection*{Acknowledgements.}
We thank Boris Kunyavski\u{\i} for suggesting Remarks \ref{r:K1} and \ref{r:K2}, and 
we thank Taeyeoup Kang for helpful discussions and email correspondence.
We are grateful to Skip Garibaldi for putting our question to the LLM Claude Fable 5.

Claude Fable 5 gave an autonomous proof of Theorem \ref{t:Stab} 
on which the proof of Theorem \ref{t:main} is based,
but in this paper we give a full proof of this theorem 
using only Claude's  {\em ideas}, not its proof.

The final version of this paper was proofread by Claude, 
and while proofreading, Claude suggested adding
Propositions \ref{p:nd} and \ref{p:dg-R-C},
which together gave a better proof of Theorem \ref{t:m}
than our original one.

\appendix

\section{Program code}
\label{app:code}

In this appendix we provide the code of the programs used in our computer calculations. 
We use the computer algebra systems {\sf GAP}4 \cite{GAP4} along with
its package {\sf SLA} \cite{sla}, and {\sc Magma} \cite{magma}.

\begin{implementation}\label{exa:1}
Let $W$ be the space in $\bigwedge^2 V_6^*$ spanned by $w_1,w_2,w_3,w_4$
of Theorem \ref{t:main}. Here we give a {\sf GAP} calculation that shows
that the Lie algebra of the stabilizer of $W$ in $\GL(6,\C)$ is spanned
by the identity. In our {\sf GAP} sessions we assume that the
{\sf SLA} package is loaded.

First we construct the Lie algebra $\gl(6,\C)$ (which we denote \verb+L+)
and the module $\bigwedge^2 V_6^*$:
\begin{verbatim}
gap> L:= FullMatrixLieAlgebra( CF(4), 6 );;
gap> V6:= LeftAlgebraModule( L, function(x,v) return x*v; end, CF(4)^6 );;
gap> DV6:= DualAlgebraModule( V6 );;
gap> Wed6:= ExteriorPowerOfAlgebraModule( DV6, 2 );;
\end{verbatim}

Next we define the elements $w_1,w_2,w_3,w_4$. By inspecting the basis
of \verb+Wed6+ one sees that the elements below are correct. Here we do not
perform this verification.

\begin{verbatim}
gap> vv:=BasisVectors( Basis(Wed6) );;
gap> w1:= vv{[3,4]}*[1,1];;
gap> w2:= vv{[2,5,7,8,11,12]}*[-1,-1,1,1,-1,1];;
gap> w3:= vv{[2,5,8,10,11,13]}*[-1,-1,1,-1,-1,1];;
gap> w4:= vv{[9,14,15]}*[-E(4)/2,E(4)/2,1];;
\end{verbatim}

Unfortunately, currently in {\sf GAP} there is no function for computing
the stabilizer of a subspace. Therefore, we set up the equations by hand:

\begin{verbatim}
gap> n:= Dimension( L );;
gap> x:= BasisVectors( Basis(L) );;
gap> w:= [w1,w2,w3,w4];;
gap> a:= List( [1..n], i -> [] );;
gap> for i in [1..n] do for j in [1..4] do 
> a[i][j]:= Coefficients( Basis(Wed6), x[i]^w[j] ); od; od;
gap> ww:= List( w, v -> Coefficients( Basis(Wed6), v ) );;

gap> eqns:=[];;
gap> for j in [1..4] do for k in [1..15] do
> eq:= List( [1..n+16], r -> 0 );;
> for i in [1..n] do eq[i]:= a[i][j][k]; od;
> for s in [1..4] do eq[n+s+4*(j-1)]:= eq[n+s+4*(j-1)]+ww[j][k]; od;
> Add( eqns, eq );
> od; od;
\end{verbatim}

We solve the equations. For each solution we take the first \verb+n+
coordinates and form the subspace of \verb+L+ spanned by them:

\begin{verbatim}
gap> sol:= NullspaceMat( TransposedMat( eqns ) );;
gap> ss:= List( sol, s -> s{[1..n]} );;
gap> stab:= Subalgebra( L, List( ss, s -> s*Basis(L) ) );;
gap> Dimension( stab );
1
gap> Display(Basis(stab)[1]);
[ [  1,  0,  0,  0,  0,  0 ],
  [  0,  1,  0,  0,  0,  0 ],
  [  0,  0,  1,  0,  0,  0 ],
  [  0,  0,  0,  1,  0,  0 ],
  [  0,  0,  0,  0,  1,  0 ],
  [  0,  0,  0,  0,  0,  1 ] ]
\end{verbatim}
\end{implementation}  

\begin{implementation}\label{exa:2}
The following piece of {\sc Magma} code computes the Pfaffian of the matrix in
\eqref{eq:AAmatrix}.

\begin{verbatim}
> F<i>:= CyclotomicField(4);
> P<x1,x2,x3,x4>:= PolynomialRing(F,4);
> m:= [[0 , x4 , i/2*x4 , x3 , x2 , -x2-x3],
> [-x4 , 0 , -x3 , -i/2*x4 , x2+x3 , x2],
> [-i/2*x4 , x3 , 0 , 0 , -x2-x3 , x1],
> [-x3 , i/2*x4 , 0 , 0 , x1 , -x2-x3],
> [-x2 , -x2-x3 , x2+x3 , -x1 , 0 , 0],
> [x2+x3 , -x2 , -x1 , x2+x3 , 0 , 0]];
> Pfaffian( Matrix( m ) );
x1^2*x4 - i*x1*x2*x4 - 2*x2^2*x3 - x2^2*x4 - 2*x2*x3^2 - 2*x2*x3*x4 - x3^2*x4
\end{verbatim}
\end{implementation}  

\begin{implementation}\label{exa:3}
The following piece of code computes polynomial equations describing\newline
$\Stab_{\GL(W)}(\C^\times\scdot f)$ with $f$ as in \eqref{eq:f}. 
\begin{verbatim}
> F<i>:= CyclotomicField(4);
> P<r,d,a11,a12,a13,a14,a21,a22,a23,a24,a31,a32,a33,a34,a41,a42,a43,a44>:= 
> PolynomialRing( F, 18 );
> R<x1,x2,x3,x4>:= PolynomialRing( P, 4 );
> f:= x1^2*x4 - i*x1*x2*x4 - 2*x2^2*x3 - x2^2*x4 - 2*x2*x3^2 - 2*x2*x3*x4 - x3^2*x4;
> M:= Matrix( R, 4, 4, [ P.i : i in [3..18] ] );
> v:= Vector( [x1,x2,x3,x4] );
> vals:= Eltseq( v*Transpose(M) );
> h:=Evaluate( f, vals ) -r*f;
> eqns:= Coefficients(h);
> eqns cat:= [ Determinant( Matrix( 4, 4, [ P.i : i in [3..18] ] ))*d-1 ];
> eqns:= Reduce(eqns);
> time gb:= GroebnerBasis(eqns);
Time: 260.910
> gb;
[
    r - a44^3,
    d*a12*a44^3 - 1/2*i*d*a44^4 + 1/2*i,
    d*a44^5 - 2*i*a12 - a44,
    a11 - 2*i*a12 - a44,
    a12^2 - i*a12*a44,
    a13,
    a14,
    a21,
    a22 - a44,
    a23,
    a24,
    a31,
    a32,
    a33 - a44,
    a34,
    a41,
    a42,
    a43
]
\end{verbatim}
\end{implementation}

\begin{implementation}\label{exa:4}
Here we give code for solving the linear equations \eqref{e:lin}. Note that
\eqref{e:lin} yields a number of linear equations for each $x\in \C^4$. So
a priori we have an infinite number of equations. We deal with this by
choosing six elements $x\in \C^4$ such that the rank of $A(x)$ is less than
6. The corresponding equations then give the desired result.

First in {\sf GAP} we define the following functions.
\begin{verbatim}
mat:= function( vec )

        local x1, x2, x3, x4, i;

        x1:= vec[1]; x2:= vec[2]; x3:= vec[3]; x4:= vec[4];
        i:= E(4);

        return [[0 , x4 , i/2*x4 , x3 , x2 , -x2-x3],
                [-x4 , 0 , -x3 , -i/2*x4 , x2+x3 , x2],
                [-i/2*x4 , x3 , 0 , 0 , -x2-x3 , x1],
                [-x3 , i/2*x4 , 0 , 0 , x1 , -x2-x3],
                [-x2 , -x2-x3 , x2+x3 , -x1 , 0 , 0],
                [x2+x3 , -x2 , -x1 , x2+x3 , 0 , 0]];

end;

solns:= function(M)

        local xx, eqq, x, AMx, nn, Ax, uu, n, u, eqn, k, l,
              sol, res, s, g;

        xx:= [ [1,0,0,0], [0,1,0,0], [0,0,1,0], [0,0,0,1],
               [1,1,0,0], [1,0,1,0] ];
        eqq:= [ ];
        for x in xx do
            AMx:= mat( M*x );
            nn:= NullspaceMat( TransposedMat(AMx) );
            Ax:= mat( x );
            uu:= TransposedMat( Ax );
            for n in nn do
                for u in uu do
                    eqn:= [];
                    for k in [1..6] do
                        for l in [1..6] do
                            eqn[l+(k-1)*6]:= n[k]*u[l];
                        od;
                    od;
                    Add( eqq, eqn );
                od;
            od;
        od;

        sol:= NullspaceMat( TransposedMat(eqq) );
        res:= [ ];
        for s in sol do
            g:= List( [1..6], q -> [ ] );
            for k in [1..6] do
                for l in [1..6] do
                    g[k][l]:= s[l+(k-1)*6];
                od;
            od;
            Add( res, g );
        od;

        return res;

end;
\end{verbatim}

Then in {\sf GAP}, after having read these functions, we do
\begin{verbatim}
gap> M:= IdentityMat(4);;
gap> solns(M);
[ [ [ 1, 0, 0, 0, 0, 0 ], [ 0, 1, 0, 0, 0, 0 ], [ 0, 0, 1, 0, 0, 0 ], 
    [ 0, 0, 0, 1, 0, 0 ], [ 0, 0, 0, 0, 1, 0 ], [ 0, 0, 0, 0, 0, 1 ] ] ]
gap> M:= [[-1,E(4),0,0],[0,1,0,0],[0,0,1,0],[0,0,0,1]];;
gap> solns(M);
[  ]
\end{verbatim}
\end{implementation}


\end{document}